\documentclass[12pt,english]{amsart}
\usepackage[T1]{fontenc}
\usepackage[utf8]{inputenc}
\usepackage{amstext}
\usepackage{amsthm}
\usepackage{amssymb}

\makeatletter
\numberwithin{equation}{section}
\numberwithin{figure}{section}
\theoremstyle{plain}

\theoremstyle{definition}

\theoremstyle{plain}

\theoremstyle{plain}

\usepackage[english]{babel}
\usepackage[all]{xy}
 \usepackage{amsthm}
\usepackage{mathrsfs}
\usepackage{enumerate}
\usepackage{tikz}
\usetikzlibrary{shapes.geometric}
\usepackage{physics}
\usepackage[notcite, final, notref]{showkeys}
\usepackage{dsfont}
\newtheorem{theorem}{Theorem}[section]
\newtheorem{proposition}[theorem]{Proposition}\newtheorem{corollary}[theorem]{Corollary}\newtheorem{definition}[theorem]{Definition}\theoremstyle{definition}
\newtheorem{remark}[theorem]{Remark}

\newcommand{\w}{\omega}

\usepackage{tikz}
\usetikzlibrary{arrows,shapes,positioning,shadows,trees,matrix}
\usepackage{xcolor}

\title[]{Brownian motion: the hyperbolic number setting}

\author[D. Alpay]{Daniel Alpay}
\address{(DA) Schmid College of Science and Technology \\
Chapman University\\
One University Drive
Orange, California 92866\\
USA}
\email{alpay@chapman.edu}

\author[I. Cho]{Ilwoo Cho}
\address{(IC) Saint Ambrose University \\
Department of Mathematics and Statistics\\
518 W. Locust St.
Davenport, Iowa 52803\\
USA}
\email{choilwoo@sau.edu}

\author[L. Mayats-Alpay]{Liora Mayats-Alpay}
\address{(LMA) Computational and Data Sciences, Schmid College of Science and Technology\\
Chapman University\\
One University Drive
Orange, California 92866\\
USA}
\email{mayatsalpay@chapman.edu}

\keywords{hyperbolic numbers; hyperbolic probabilities; Brownian motion, fractional Brownian motion}%
\subjclass[2010]{Primary 60J65, 60H40; Secondary 16W} %
\thanks{D. Alpay thanks the Foster G. and Mary McGaw Professorship in
  Mathematical Sciences, which supported his research.}

\makeatother

\usepackage{babel}
\providecommand{\corollaryname}{Corollary}
\providecommand{\definitionname}{Definition}
\providecommand{\propositionname}{Proposition}
\providecommand{\theoremname}{Theorem}

\begin{document}
\maketitle
\begin{abstract}
The purpose of this paper is to define normal Gaussian variables in
the setting of hyperbolic probabilities, and introduce an associated
Brownian motion, when both the index and the values of the process
lie in the real algebra $\mathbb{H}$ of hyperbolic numbers. 
 In Hida's white noise
space, we construct two probability measures (say $P_1$ and $P_2$), and associate to them two families of $N(0,1)$ variables 
$(Z_n)_{n\in\mathbb N_0}$ (independent with respect to $P_1$) and $(W_n)_{n\in\mathbb N_0}$ (independent with respect to $P_2$). An important feature is that the $Z_n$ and $W_m$ need not be mutually independent either with respect to $P_1$ or $P_2$. An hyperbolic
normal Gaussian variable is constructed (in non-degenerate cases)
from two classical Gaussian variables and the hyperbolic Brownian motion
is, in general, composed from two copies of the classical Brownian
motion. Using the associated Gelfand triples we also compute the derivative
of the hyperbolic Brownian motion as a stochastic distribution.
The argument extends to the $\mathbb{H}$-valued fractional
Brownian motion, and more generally to a wide family of $\mathbb{H}$-valued
stationary-increment second order processes.
\end{abstract}

\maketitle
\tableofcontents{}

\section{Introduction}
\label{sec-int}
\setcounter{equation}{0} The Brownian motion plays a key role in
mathematics, and it is of interest to consider its extensions to different
settings. For instance the paper \cite{MR4710618} considers Brownian
motion indexed by the subsets of a sigma-algebra, and the paper
\cite{abreu2025analyticcontinuationtimebrownian} defines and studies
a Brownian motion indexed by the complex numbers. Let now $\mathbb{H}$
denote the hyperbolic numbers (see Section \ref{sec-2}). In the present
work we construct an $\mathbb{H}$-valued Brownian motion and its
derivative, with index in $\mathbb{H}$. To set the framework, we
first recall that the classical Brownian motion can be built as follows.
Let $Z_{0},Z_{1},\ldots$ be a countable family of independent normal
Gaussian variables in the probability space $(\Omega,\mathcal{A},P)$.
For such a construction, see e.g. \cite[pp. 38-39]{Neveu68}, where $Z_{0},Z_{1},\ldots$
are chosen to be the coordinate functions of the probability space $\left(\mathbb{R},\frac{1}{\sqrt{2\pi}}e^{-\frac{x^{2}}{2}}dx\right)^{\mathbb{N}_{0}}$ endowed with the
product probability. Furthermore, let $\zeta_{0},\zeta_{1},\ldots$ denote the normalized physicists Hermite functions, that is 
\begin{equation}
\zeta_{n}(z)=\frac{(-1)^{n}}{\pi^{1/4}2^{n/2}(n!)^{1/2}}e^{\frac{z^{2}}{2}}\left(\frac{\partial}{\partial z}\right)^{n}e^{-z^{2}},\quad n=0,1,2,\ldots\label{hnnorm}
\end{equation}
See e.g. \cite[(1.1.18) p. 4]{thangavelu1993lectures}.
The formula 
\begin{equation}
B_{t}(\w)=\sum_{n=0}^{\infty}\left(\int_{0}^{t}\zeta_{n}(x)dx\right)Z_{n}(\w),
\end{equation}
where the convergence is in the underlying $L^2(\Omega,\mathcal A,P)$, is a realization of the Brownian motion and its covariance function
is 
\begin{equation}
E_{P}(B_{t}B_{s})=\frac{|t|+|s|-|t-s|}{2},\quad t,s\in\mathbb{R},\label{classical-bm}
\end{equation}
where $E_{P}$ denotes the mathematical expectation with respect to
$P$.\smallskip{}

As is well known, for almost all $\w$ the function $t\mapsto B_{t}(\w)$
has no derivative a.e. On the other hand, using Hida's white noise
space theory, and the notion of Gelfand triple and stochastic distributions
one can interpret the sum 
\[
N_{t}(\w)=\sum_{n=0}^{\infty}\zeta_{n}(t)Z_{n}(\w)
\]
as a stochastic distribution and give a precise meaning to the formula
\[
\frac{dB_{t}}{dt}=N_{t},
\]
where $\frac{dB_{t}}{dt}$ is a limit computed in an underlying topological vector space. See \cite{new_sde}.\smallskip{}

The purpose of this paper is to define and study a version of the
Brownian motion, where both the index set and the values of the process
belong to the algebra of hyperbolic numbers, here denoted by $\mathbb{H}$.
In terms of matrix representations, $\mathbb{H}$ consists of the
matrices of the form 
\begin{equation}
p=\begin{pmatrix}a & b\\
b & a
\end{pmatrix}=U\begin{pmatrix}a+b & 0\\
0 & a-b
\end{pmatrix}U,\quad{\rm with}\quad U=\frac{1}{\sqrt{2}}\begin{pmatrix}1 & 1\\
1 & -1
\end{pmatrix},\quad a,b\in\mathbb{R}.
\end{equation}
See \eqref{eq2} below. \smallskip{}

In the process of replacing the real numbers by the hyperbolic numbers,
we need to define, or recall the definitions of, positive definite
functions and Gaussian variables in the hyperbolic setting. $\mathbb{H}$-valued
probabilities were introduced and studied in \cite{MR3651492}. The
fact that $\mathbb{H}$ is a lattice plays a key role in the arguments;
more precisely, given $p$ and $q$ in $\mathbb{H}$, with respective
matrix representations 
\begin{equation}
p=U\begin{pmatrix}\lambda_{1} & 0\\
0 & \lambda_{2}
\end{pmatrix}U\quad{\rm and}\quad q=U\begin{pmatrix}\mu_{1} & 0\\
0 & \mu_{2}
\end{pmatrix}U\label{p}
\end{equation}
they have uniquely defined greatest lower bound and least upper bound, namely 
\begin{equation}
p\wedge q=U\begin{pmatrix}\lambda_{1}\wedge\mu_{1} & 0\\
0 & \lambda_{2}\wedge\mu_{2}
\end{pmatrix}U\quad{\rm and}\quad p\vee q=U\begin{pmatrix}\lambda_{1}\vee\mu_{1} & 0\\
0 & \lambda_{2}\vee\mu_{2}
\end{pmatrix}U
\label{min-sup}
\end{equation}
(for the definition of the order, see \eqref{order-}).
Besides constructing the hyperbolic Brownian motion we also consider its derivative,
as a $\mathbb{H}$-valued stochastic distribution, and study the case of stationary increment processes in the hyperbolic setting.
In the classical case, these are Gaussian processes with covariance functions of the form 
\begin{equation}
K(t,s)=\int_{\mathbb{R}}\frac{e^{-itu}-1}{u}\frac{e^{isu}-1}{u}d\sigma(u)\label{k-t-s}
\end{equation}
where $d\sigma$ is a positive measure subject to $\int_{\mathbb{R}}\frac{d\sigma(u)}{u^{2}+1}<\infty$.
We will focus on the real-valued case. This happens if and only if
$d\sigma(u)=-d\sigma(-u)$ and then \eqref{k-t-s} takes the form
\begin{equation}
K_{r}(t,s)=\int_{\mathbb{R}}\frac{(1-\cos tu)(1-\cos su)+(\sin tu)(\sin su)}{u^{2}}d\sigma(u).\label{k-t-s-1}
\end{equation}
We will also consider the derivatives of such processes. As in
the papers \cite{aal2,aal3} we look at the case where $d\sigma(u)=m(u)du$,
where the measurable positive function $m(u)$ satisfies 
\begin{equation}
  \int_{\mathbb{R}}\frac{m(u)du}{u^{2}+1}<\infty,
  \label{m-u-2}
\end{equation}
and adapt the approach of these papers to the real-valued case (i.e
when $m(u)$ is even). The case
\[
m(u)=\frac{|u|^{1-2H}}{2\pi},\quad H\in(0,1),
\]
corresponds to the fractional Brownian motion with Hurst parameter $H$, with $H=1/2$ giving back the Brownian motion. More precisely,

\begin{equation}
  \begin{split}
    K_{r}(t,s)&=\frac{1}{2\pi}\int_{\mathbb{R}}\frac{(1-\cos tu)(1-\cos su)+(\sin tu)(\sin su)}{u^{2}}|u|^{1-2H}du\\
&=\frac{V_H}{\pi}\left(|t|^{2H}+|s|^{2H}-|t-s|^{2H}\right)
    \label{k-t-s-1-2}
\end{split}
\end{equation}
where
\begin{equation}
  \label{V-H}
  V_H=\int_0^\infty\frac{1-\cos u}{u^2}u^{1-2H}du=\begin{cases}
\frac{\Gamma(2-2H)\cos(H\pi)}{(1-2H)2H},\quad H\in(0,1)\setminus\left\{ \frac{1}{2}\right\} \\
\,\,\frac{\pi}{2},\,\hspace{2.4cm}H=\frac{1}{2}.
\end{cases}
\end{equation}
The complex-valued case involves extension to the bi-complex numbers (rather than the hyperbolic numbers), and will be considered elsewhere.\smallskip{}

\section{Hyperbolic numbers, matrices, and Probability}

\setcounter{equation}{0} \label{sec-2} 

\subsection{Hyperbolic Numbers}

This section is essentially of a review nature. We set the notation
and refer to \cite{MR3309382,MR3410909,MR1094818} for more information.
Hyperbolic numbers are defined as expressions of the form $p=a+bk$,
where $a,b\in\mathbb{R}$ and $k\not\in\mathbb{R}$ commutes with the
real numbers and satisfies $k^{2}=1$. Taking 
\begin{equation}
k=\begin{pmatrix}0 & 1\\
1 & 0
\end{pmatrix}
\end{equation}
we obtain a matrix representation of hyperbolic numbers, and write
\begin{equation}
p=\begin{pmatrix}a & b\\
b & a
\end{pmatrix}=U\begin{pmatrix}a+b & 0\\
0 & a-b
\end{pmatrix}U,\quad{\rm with}\quad U=\frac{1}{\sqrt{2}}\begin{pmatrix}1 & 1\\
1 & -1
\end{pmatrix}.\label{eq2}
\end{equation}
We set 
\[
e_{+}=\frac{1+k}{2}\quad{\rm and}\quad e_{-}=\frac{1-k}{2}.
\]
Note that 
\begin{eqnarray}
e_{+}^{2} & = & e_{+},\\
e_{-}^{2} & = & e_{-},\\
e_{+}e_{-} & = & 0,\\
e_{+}+e_{-} & = & 1,\\
e_{+}-e_{-} & = & k.
\end{eqnarray}
Thus 
\[
\begin{split}p & =a+kb\\
 & =a(e_{+}+e_{-})+(e_{+}-e_{-})b\\
 & =(a+b)e_{+}+(a-b)e_{-},
\end{split}
\]
the latter being called the idempotent decomposition of the hyperbolic
number $p$. Thus, as a $\mathbb{R}$-vector space, 
\[
\mathbb{H}=\mathrm{span}_{\mathbb{R}}\left\{ 1,k\right\} \overset{\mathrm{iso}}{=}\mathrm{span}_{\mathbb{R}}\left\{ e_{+},e_{-}\right\} ,
\]
satisfying 
\[
\mathbb{R}e_{+}\cap\mathbb{R}e_{-}=\left\{ 0=0+0k\right\} ,\;\;\mathrm{in\;\;}\mathbb{H},
\]
i.e., 
\[
\mathbb{H}=\left(\mathbb{R}e_{+}\right)\oplus_{\mathbb{R}}\left(\mathbb{R}e_{-}\right),
\]
where $\oplus_{\mathbb{R}}$ denotes the direct product of $\mathbb{R}$-algebras.\smallskip

We endow $\mathbb H$ with the involution
\[
\left(x+yk\right)^{\circledast}=x-yk,\;\;\;\;\forall x+yk\in\mathbb{H}.
\]
Note that 
\[
  e_{-}=e_{+}^{\circledast}.
  \]
The algebra $\mathbb{H}$ is equipped with a complete $\mathbb{R}$-norm
$\left\Vert .\right\Vert $, defined by 
\[
\left\Vert x+yk\right\Vert \overset{\mathrm{def}}{=}\sqrt{x^{2}+y^{2}},\;\;\;\;\forall x+yk\in\mathbb{H},
\]
see \cite{alpay-cho-l2,MR4920356}. So, we understand the hyperbolic numbers
$\mathbb{H}$ as a Banach $*$-algebra over $\mathbb{R}$.\smallskip

Let now $f$ be a real-valued function defined on the real numbers. One can
extend (in a non-unique way) the function $f$ to be from $\mathbb{H}$ into itself via the formula 
\begin{equation}
f_{\mathbb{H}}(p)=U\begin{pmatrix}f(\lambda_{1}) & 0\\
0 & f(\lambda_{2})
\end{pmatrix}U\,.
\end{equation}

For instance, with $f(x)=|x|$ we have 
\begin{equation}
  \label{p-q}
|p-q|_{\mathbb{H}}=U\begin{pmatrix}|\lambda_{1}-\mu_{1}| & 0\\
0 & |\lambda_{2}-\mu_{2}|
\end{pmatrix}U\,.
\end{equation}
Given two elements $p=\lambda_{1}e_{+}+\lambda_{2}e_{-}$ and $q=\mu_{1}e_{+}+\mu_{2}e_{-}$
we say that $p\ge q$ if 
\begin{equation}
  \label{order-}
\lambda_{j}\ge\mu_{j},\quad j=1,2.
\end{equation}
Equivalently $p\ge q$ if and only if the same order holds for the
corresponding matrix representations. In particular, with $q=0$ we
have $p\ge0$ if and only if $\lambda_{1}\ge0$ and $\lambda_{2}\ge0$.
In terms of the representation $p=a+bk$, we have 
\begin{equation}
p\ge0\quad\iff\quad\begin{pmatrix}a & b\\
b & a
\end{pmatrix}\ge0\quad\iff\quad\begin{cases}
a+b\ge0,\quad{\rm and}\\
a-b\ge0.
\end{cases}
\label{2-10}
\end{equation}

\begin{definition} The closed unit interval of $\mathbb{H}$ is 
\begin{equation}
\mathbb{I}=\left\{ p\in\mathbb{H}\,\,;\,0\le p\le1\right\} .
\end{equation}
See figure \ref{fig-1}.
\end{definition}

In terms of the representations of $p$ it is easily seen that:

\begin{proposition} Let $p=a+bk=\lambda_{1}e_{+}+\lambda_{2}e_{-}$.
Then the following are equivalent:\\
 $(1)$ $p\in\mathbb{I}$.\\
 $(2)$ $\lambda_{j}\in[0,1]$, $j=1,2$.\\
 $(3)$ $a$ and $b$ satisfy the inequalities 
\[
\begin{split}-a & \le b\le1-a,\\
a-1 & \le b\le a.
\end{split}
\]
\end{proposition}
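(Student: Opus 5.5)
We prove $(1)\Leftrightarrow(2)$ and $(2)\Leftrightarrow(3)$; everything is read off the definition of the order \eqref{order-} and the idempotent decomposition.

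For $(1)\Leftrightarrow(2)$, write $0=0e_{+}+0e_{-}$ and $1=e_{+}+e_{-}$, using $e_{+}+e_{-}=1$. By \eqref{order-}, $0\le p$ holds if and only if $\lambda_{1}\ge0$ and $\lambda_{2}\ge0$. Likewise $p\le1$ holds if and only if $\lambda_{1}\le1$ and $\lambda_{2}\le1$. Combining the two gives $\lambda_{j}\in[0,1]$ for $j=1,2$, which is exactly membership in $\mathbb{I}$. The only point needing care is that the coefficients $\lambda_{1},\lambda_{2}$ are uniquely determined by $p$. This follows from $\mathbb{H}=(\mathbb{R}e_{+})\oplus_{\mathbb{R}}(\mathbb{R}e_{-})$, so \eqref{order-} is well defined.

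For $(2)\Leftrightarrow(3)$, use the computation $p=a+kb=(a+b)e_{+}+(a-b)e_{-}$, which gives $\lambda_{1}=a+b$ and $\lambda_{2}=a-b$. The condition $0\le a+b\le 1$ rearranges to $-a\le b\le 1-a$. The condition $0\le a-b\le1$ is equivalent to $a-1\le b\le a$: indeed $a-b\ge0$ gives $b\le a$, and $a-b\le1$ gives $b\ge a-1$. These are precisely the inequalities in $(3)$.

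None of the steps is a real obstacle; the argument is bookkeeping. The one place to be careful is matching $\lambda_{1}=a+b$ and $\lambda_{2}=a-b$ with the correct pair of inequalities in $(3)$, consistently with \eqref{2-10}.
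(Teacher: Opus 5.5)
Your proof is correct and is exactly the direct verification the paper has in mind: the paper gives no written proof, only the remark that the equivalences are ``easily seen'' from the idempotent decomposition $p=(a+b)e_{+}+(a-b)e_{-}$ and the componentwise order \eqref{order-}. Your bookkeeping fills in that remark faithfully, including the identification $\lambda_{1}=a+b$, $\lambda_{2}=a-b$ and the uniqueness of these coefficients.
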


  \begin{figure}[h]
        \includegraphics[width=0.8\linewidth]{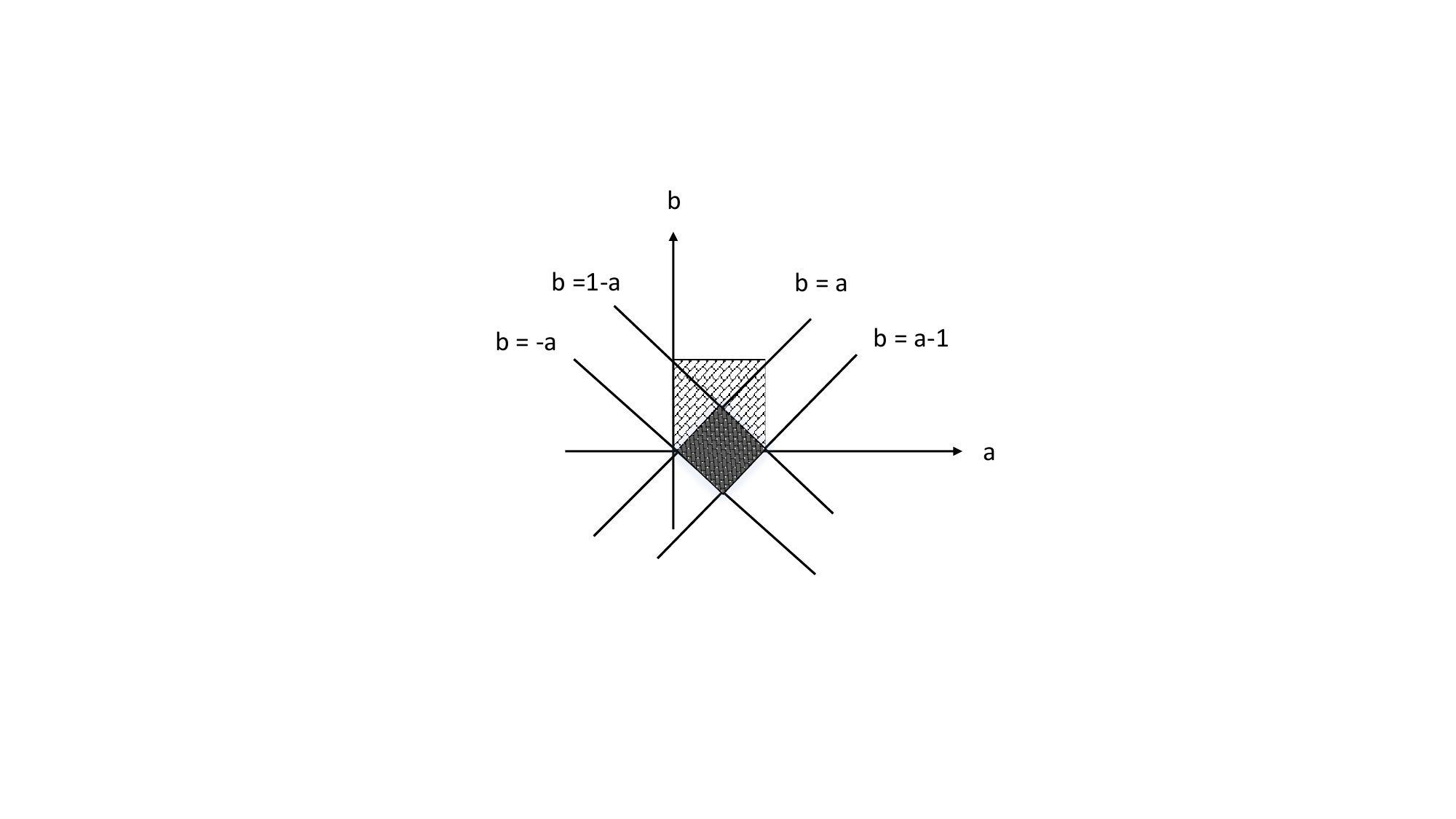}
        \vspace{-2cm}
        \caption{The set $\mathbb I$}
        \label{fig-1}
\end{figure}

In the setting of matrices, write $M=A+kB\in\mathbb{H}^{n\times n}$,
with $A$ and $B$ in $\mathbb{R}^{n\times n}$. Then $M$ is positive
semi-definite if and only if 
\[
-A\le B\le A.
\]
It follows in particular that both $A$ and $B$ are symmetric.

\subsection{Hyperbolic Probability}

In this section, we review definitions and results of \cite{MR3651492} where the probabilities
with  values in the hyperbolic numbers $\mathbb{H}$ are defined
and their properties are studied. It is shown there that the $\mathbb{H}$-valued
probabilities act just like the classical ($\mathbb{R}$-valued) probabilities
under natural additional conditions, and the corresponding probability
theory over $\mathbb{H}$ is established. This probability theory
of \cite{MR3651492} over $\mathbb{H}$ is extended to those over the scaled-hyperbolic
numbers $\mathbb{D}_{t}$ in \cite{MR4853153}, where $\mathbb{D}_{t}$
is the $t$-scaled hyperbolic numbers in the sense of \cite{MR4853153}
for all scales $t\in\mathbb{R}$. Scaled-hyperbolic numbers $\left\{ \mathbb{D}_{t}\right\} _{t\in\mathbb{R}}$
cover the complex numbers, 
\[
\mathbb{D}_{-1}=\mathbb{C}=\left\{ x+yi:x,y\in\mathbb{R},\;i^{2}=-1\right\} ,
\]
the dual numbers, 
\[
\mathbb{D}_{0}=\mathbf{D}=\left\{ x+yJ:x,y\in\mathbb{R},\;J^{2}=0\right\} ,
\]
and the hyperbolic numbers, 
\[
\mathbb{D}_{1}=\mathbb{H}=\left\{ x+yk:x,y\in\mathbb{R},\;k^{2}=1\right\} .
\]
See e.g. \cite{MR4853153, MR4920356}. By using the probability-theoretic structures of \cite{MR4920356}, we constructed $L^{2}$-space-like vector
spaces over the real field $\mathbb{R}$, and studied functional analysis
on them in \cite{alpay-cho-l2}. Motivated by the main constructions and results
of \cite{alpay-cho-l2}, we here focus on the hyperbolic numbers $\mathbb{H}$,
the $\mathbb{H}$-valued probabilities, and the corresponding statistical
analysis. To do that, in this section, we review basic concepts and
results of \cite{MR4853153}.\smallskip

\begin{definition}
Let $\left(X,\sigma\left(X\right)\right)$ be a measurable space equipped
with a set $X$ and a $\sigma$-algebra $\sigma\left(X\right)$. Suppose
\[
\mu:\sigma\left(X\right)\rightarrow\mathbb{H}
\]
is a $\mathbb{H}$-valued function satisfying the $\sigma$-additivity:
if $\left\{ X_{k}\right\} _{k\in\mathbb{N}}$ is a family of mutually
disjoint measurable subsets of $X$ in $\sigma\left(X\right)$, then
\[
\mu\left(\overset{\infty}{\underset{k=1}{\cup}}X_{k}\right)=\overset{\infty}{\underset{k=1}{\sum}}\mu\left(X_{k}\right),\;\;\;{in\;\;\;}\mathbb{H},
\]
where the convergence is in the topology of $\mathbb{H}$. Then this
function $\mu$ is called a $\mathbb{H}$-valued measure.
\end{definition}

If $f:\left(X,\sigma\left(X\right)\right)\rightarrow\mathbb{H}$ is
a $\mathbb{H}$-valued measurable function, then there exist $\mathbb{R}$-valued
measurable functions $f_{1},f_{2}:\left(X,\sigma\left(X\right)\right)\rightarrow\mathbb{R}$,
such that 
\[
f\left(x\right)=f_{1}\left(x\right)+f_{2}\left(x\right)k\in\mathbb{H},\;\;\;\forall x\in X,
\]
with 
\[
f\left(x\right)^{\circledast}=f_{1}\left(x\right)-f_{2}\left(x\right)k\in\mathbb{H},\;\;\;\forall x\in X,
\]
satisfying that 
\[
\mathrm{Re}\left(f\right)\overset{\mathrm{let}}{=}f_{1}=\frac{1}{2}\left(f+f^{\circledast}\right),\;\;\mathrm{and\;\;\mathrm{Im}}\left(f\right)\overset{\mathrm{let}}{=}f_{2}=\frac{1}{2}\left(f-f^{\circledast}\right).
\]
Now, let $\mu_{1}$ and $\mu_{2}$ be $\mathbb{R}$-valued measures
decomposed as
\[
\mu_{l}=\mu_{l}^{+}-\mu_{l}^{-},\;\;\;\; l=1,2,
\]
where $\mu_{l}^{+}$ and $\mu_{l}^{-}$ are non-negative measures on $\left(X,\sigma\left(X\right)\right)$, satisfying 
\[
\mu=\mu_{1}+k\mu_{2}=\left(\mu_{1}^{+}-\mu_{1}^{-}\right)+k\left(\mu_{2}^{+}-\mu_{2}^{-}\right).
\]
Then one can define the integral of a ${\mathbb H}$-valued measurable
function $f:\left(X,\sigma\left(X\right)\right)\rightarrow\mathbb{H}$
by

\medskip{}

$\;\;\;$$\int_{X}fd\mu=\int_{X}\left(\mathrm{Re}\left(f\right)+k\mathrm{Im}\left(f\right)\right)d\mu=\int_{X}f_{1}d\mu_{1}+k\int_{X}f_{2}d\mu_{2}$

\medskip{}

$\;\;\;\;\;\;\;\;\;\;\;\;\;\;\;\;$$=\int_{X}f_{1}d\left(\mu_{1}^{+}-\mu_{1}^{-}\right)+k\int_{X}f_{2}d\left(\mu_{2}^{+}-\mu_{2}^{-}\right)$

\medskip{}

$\;\;\;\;\;\;\;\;\;\;\;\;\;\;\;\;$$=\left(\int_{X}f_{1}d\mu_{1}^{+}-\int_{X}f_{1}d\mu_{1}^{-}\right)+k\left(\int_{X}f_{2}d\mu_{2}^{+}-\int_{X}f_{2}d\mu_{2}^{-}\right)$.

\begin{definition}
Let $\mu=\mu_{1}+k\mu_{2}$ be a $\mathbb{H}$-valued measure on a
measurable space $\left(X,\sigma\left(X\right)\right)$. This $\mathbb{H}$-valued
measure $\mu$ is said to be a $\mathbb{H}$-valued probability, if

(i) $\mu_{1}+\mu_{2}$ and $\mu_{1}-\mu_{2}$ are non-negative measures
on $\left(X,\sigma\left(X\right)\right)$, and

(ii) $\mu\left(X\right)\in\left\{ 1,e_{+},e_{-}\right\} $, i.e.,
$\mu\left(X\right)=1$, or $\mu\left(X\right)=e_{+}$, or $\mu\left(X\right)=e_{-}$.
\end{definition}

By definition, one can get the following result proven in {[}6{]}.
\begin{proposition}
Suppose $\mu:\left(X,\sigma\left(X\right)\right)\rightarrow{\mathbb H}$
is a ${\mathbb H}$-valued probability. Then:

$\mathrm{(1)}$ If $\mu\left(X\right)=1$, then $\left(\mu_{1}+\mu_{2}\right)\left(X\right)=1$,
and $\left(\mu_{1}-\mu_{2}\right)\left(X\right)=1$.

$\mathrm{(2)}$ If $\mu\left(X\right)=\mathbf{e}_{1}$, then $\left(\mu_{1}+\mu_{2}\right)\left(X\right)=1$,
and $\left(\mu_{1}-\mu_{2}\right)\left(X\right)=0$.

$\mathrm{(3)}$ If $\mu\left(X\right)=\mathbf{e}_{1}^{\circledast}$,
then $\left(\mu_{1}+\mu_{2}\right)\left(X\right)=0,$ and $\left(\mu_{1}-\mu_{2}\right)\left(X\right)=1$.
\end{proposition}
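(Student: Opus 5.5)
The plan is to read everything off the idempotent decomposition and compare coefficients. The measures $\mu_{1}$ and $\mu_{2}$ are real-valued, so evaluating at $X$ gives real numbers $a=\mu_{1}(X)$ and $b=\mu_{2}(X)$ with
\[
\mu(X)=a+bk=(a+b)e_{+}+(a-b)e_{-},
\]
exactly as in the computation following \eqref{eq2}. Since $\mathbb{H}=(\mathbb{R}e_{+})\oplus_{\mathbb{R}}(\mathbb{R}e_{-})$ is a direct sum, the coefficients of $e_{+}$ and $e_{-}$ are uniquely determined. Moreover, $(\mu_{1}\pm\mu_{2})(X)=a\pm b$ by linearity of evaluation on signed measures; this is well defined because finiteness of $\mu(X)\in\mathbb{H}$ forces $a,b$ finite.

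Next we write the three admissible values of $\mu(X)$ from condition (ii) of the definition in idempotent form:
\[
1=1\cdot e_{+}+1\cdot e_{-},\qquad e_{+}=1\cdot e_{+}+0\cdot e_{-},\qquad e_{-}=0\cdot e_{+}+1\cdot e_{-}.
\]
Here $\mathbf{e}_{1}$ in the statement is read as $e_{+}$, and $\mathbf{e}_{1}^{\circledast}=e_{+}^{\circledast}=e_{-}$.

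Matching coefficients in each case gives the three claims:
\begin{enumerate}[(1)]
\item If $\mu(X)=1$, then $a+b=1$ and $a-b=1$.
\item If $\mu(X)=e_{+}$, then $a+b=1$ and $a-b=0$.
\item If $\mu(X)=e_{-}$, then $a+b=0$ and $a-b=1$.
\end{enumerate}
Each pair of equalities is precisely the corresponding statement about $(\mu_{1}+\mu_{2})(X)$ and $(\mu_{1}-\mu_{2})(X)$.

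Condition (i), non-negativity of $\mu_{1}\pm\mu_{2}$, is not needed for the computation itself. It only confirms that these totals are total masses of genuine positive measures, consistent with the values $0$ and $1$ obtained. There is no real obstacle in this argument; the only point needing care is the notational identification of $\mathbf{e}_{1}$ with $e_{+}$.
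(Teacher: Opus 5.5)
Your proposal is correct and matches the paper, which gives no separate argument: it states the result follows directly from the definition (citing the original source on $\mathbb{H}$-valued probabilities). Your comparison of the coefficients of $e_{+}$ and $e_{-}$ in $\mu(X)=(\mu_{1}(X)+\mu_{2}(X))e_{+}+(\mu_{1}(X)-\mu_{2}(X))e_{-}$ is exactly that unpacking, and your reading $\mathbf{e}_{1}=e_{+}$, $\mathbf{e}_{1}^{\circledast}=e_{-}$ is the one consistent with the stated conclusions.
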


By the above proposition, if $\mu=\mu_{1}+k\mu_{2}$ is a $\mathbb{H}$-valued
probability, then at least one of the measures $\left(\mu_{1}+\mu_{2}\right)$ or $\left(\mu_{1}-\mu_{2}\right)$
is a classical ($\mathbb{R}$-valued non-negative) probability measure.

Now, let $f=f_{1}+kf_{2}=\left(f_{1}+f_{2}\right)e_{+}+\left(f_{1}-f_{2}\right)e_{-}$
be a $\mathbb{H}$-valued measurable function from a measurable space
$\left(X,\sigma\left(X\right)\right)$ to the hyperbolic numbers $\mathbb{H}$.
Then the integral of $f$ for $\mu$ is defined by 
\[
\int_{X}fd\mu\overset{\mathrm{def}}{=}\left(\int_{X}\left(f_{1}+f_{2}\right)d\left(\mu_{1}+\mu_{2}\right)\right)e_{+}+\left(\int_{X}\left(f_{1}-f_{2}\right)d\left(\mu_{1}-\mu_{2}\right)\right)e_{-}.
\]

The following proposition contains basic $\mathbb{H}$-valued-probabilistic properties shown in \cite{MR3651492}. The order relation has been defined in \eqref{2-10}.
\begin{proposition}
Let $\mu$ be the $\mathbb{H}$-valued probability on a measurable
space $\left(X,\sigma\left(X\right)\right)$.

\noindent $\mathrm{(1)}$ If $A\in\sigma\left(X\right)$, then $\mu\left(A\right)+\mu\left(A^{c}\right)=p$, with $p\in\left\{ 1,e_{+},e_{-}\right\} $.

\noindent $\mathrm{(2)}$ If $\phi$ is the empty set of $\sigma\left(X\right)$
in $X$, then $\mu\left(\phi\right)=0$.

\noindent $\mathrm{(3)}$ If $A\subseteq B$ in $\sigma\left(X\right)$,
then $\mu\left(A\right)\leq\mu\left(B\right)$ in $\mathbb{H}$.

\noindent $\mathrm{(4)}$ If $A,B\in\sigma\left(X\right)$, then 
\[
\mu\left(A\cup B\right)=\mu\left(A\right)+\mu\left(B\right)-\mu\left(A\cap B\right),
\]
and hence, 
\[
\mu\left(A\cup B\right)\leq\mu\left(A\right)+\mu\left(B\right),\;\;\;\mathrm{in\;\;\;}\mathbb{H}.
\]

\noindent $\mathrm{(5)}$ If $A_{1}\supseteq A_{2}\supseteq A_{3}\supseteq...,\;\mathrm{\textrm{in}\;}\sigma\left(X\right)$,
and $A=\overset{\infty}{\underset{k=1}{\cap}}A_{k}$ in $X$, then
\[
\underset{k\rightarrow\infty}{\mathrm{lim}}\mu\left(A_{k}\right)=\mu\left(A\right)=\mu\left(\overset{\infty}{\underset{k=1}{\cap}}A_{k}\right).
\]

\noindent $\mathrm{(6)}$ If $A_{1}\subseteq A_{2}\subseteq A_{3}\subseteq...$,
in $\sigma\left(X\right)$, and $A=\overset{\infty}{\underset{k=1}{\cup}}A_{k}$
in $\sigma\left(X\right)$, then 
\[
\underset{k\rightarrow\infty}{\mathrm{lim}}\mu\left(A_{k}\right)=\mu\left(A\right)=\mu\left(\overset{\infty}{\underset{k=1}{\cup}}A_{k}\right).
\]
\end{proposition}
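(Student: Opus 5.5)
The plan is to reduce every item to the corresponding classical statement for the two non-negative measures $\nu_{+}=\mu_{1}+\mu_{2}$ and $\nu_{-}=\mu_{1}-\mu_{2}$, via the idempotent decomposition
\[
\mu(A)=\nu_{+}(A)e_{+}+\nu_{-}(A)e_{-},\qquad A\in\sigma(X).
\]
This identity is immediate from $\mu=\mu_{1}+k\mu_{2}$, $1=e_{+}+e_{-}$ and $k=e_{+}-e_{-}$. By condition (i) of the definition of an $\mathbb{H}$-valued probability, $\nu_{\pm}$ are non-negative measures. By the preceding proposition, each $\nu_{\pm}(X)$ lies in $\{0,1\}$, with at least one equal to $1$. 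Both are finite, so all classical measure-theoretic facts apply to each of them, including continuity from above.

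For (1), additivity of $\nu_{\pm}$ on the disjoint pair $A,A^{c}$ gives $\mu(A)+\mu(A^{c})=\mu(X)=\nu_{+}(X)e_{+}+\nu_{-}(X)e_{-}$. By condition (ii) this lies in $\{1,e_{+},e_{-}\}$.

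For (2), $\nu_{\pm}(\emptyset)=0$, so $\mu(\emptyset)=0$.

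For (3), monotonicity of each $\nu_{\pm}$ gives $\nu_{\pm}(A)\le\nu_{\pm}(B)$. By the definition of the order \eqref{order-}, which compares the idempotent coefficients componentwise, this is exactly $\mu(A)\le\mu(B)$.

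For (4), apply inclusion–exclusion to each $\nu_{\pm}$ (valid since they are finite) and recombine with $e_{\pm}$ to get the identity for $\mu$. Then $\nu_{\pm}(A\cap B)\ge0$ means $\mu(A\cap B)\ge0$ in $\mathbb{H}$, which gives the subadditivity inequality.

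For (5) and (6), use continuity from above and from below of the finite measures $\nu_{\pm}$, so that $\nu_{\pm}(A_{k})\to\nu_{\pm}(A)$. Convergence in $\mathbb{H}$ is then needed. The norm $\|x+yk\|=\sqrt{x^{2}+y^{2}}$ is equivalent to the Euclidean norm on the idempotent coordinates, since $(x,y)\mapsto(x+y,x-y)$ is $\sqrt2$ times an orthogonal map. Hence convergence of both coefficients gives $\mu(A_{k})\to\mu(A)$. Alternatively, (6) follows directly from the $\sigma$-additivity of $\mu$ in the definition of an $\mathbb{H}$-valued measure, applied to $A_{k}\setminus A_{k-1}$.

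The only point needing care is this topological remark about the norm and the idempotent coordinates; everything else is bookkeeping, so I expect no real obstacle.
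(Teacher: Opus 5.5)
Your proof is correct: writing $\mu=(\mu_1+\mu_2)e_{+}+(\mu_1-\mu_2)e_{-}$ reduces (1)--(6) to the classical facts for two finite non-negative measures. The order \eqref{order-} is componentwise in exactly these coordinates, and the identity $\|x+yk\|^2=\tfrac12\bigl((x+y)^2+(x-y)^2\bigr)$ justifies passing to the limit in (5)--(6). The paper gives no proof of its own here (it quotes the result from \cite{MR3651492}), but this componentwise reduction is the same mechanism it uses for the order, the integral and conditional probability, so your route is the expected one.
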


Now, define a subset $\mathfrak{S}$ of $\mathbb{H}$ by 
\[
\mathfrak{S}\overset{\mathrm{def}}{=}\left\{ d=x+yk\in\mathbb{H}:x^{2}-y^{2}=0\right\} .
\]

\begin{definition}
Let $\left(X,\sigma\left(X\right),\mu\right)$ be a $\mathbb{H}$-valued-probability
space where 
\[
\mu=\mu_{1}+\mu_{2}k=\left(\mu_{1}+\mu_{2}\right)e_{+}+\left(\mu_{1}-\mu_{2}\right)e_{-}.
\]
For $A,B\in\sigma\left(X\right)$, define the conditional $\mathbb{H}$-valued
probability $\mu\left(A\mid B\right)$ by 
\[
  \mu\left(A\mid B\right)\overset{\mathrm{def}}{=}\frac{\left(\mu_{1}+\mu_{2}\right)\left(A\cap B\right)}{\left(\mu_{1}+\mu_{2}\right)\left(B\right)}e_{+}
  +\frac{\left(\mu_{1}-\mu_{2}\right)\left(A\cap B\right)}{\left(\mu_{1}-\mu_{2}\right)\left(B\right)}e_{-},
\]
where 
\[
\frac{\left(\mu_{1}+\mu_{2}\right)\left(A\cap B\right)}{\left(\mu_{1}+\mu_{2}\right)\left(B\right)}=\left(\mu_{1}+\mu_{2}\right)\left(A\mid B\right),
\]
and 
\[
\frac{\left(\mu_{1}-\mu_{2}\right)\left(A\cap B\right)}{\left(\mu_{1}-\mu_{2}\right)\left(B\right)}=\left(\mu_{1}-\mu_{2}\right)\left(A\mid B\right),
\]
are the usual $\mathbb{R}$-valued non-negative conditional measures
with axiomatization: 
\[
\mu\left(B\right)=0=0+0k\;\;\overset{\mathrm{axiom}}{\Longrightarrow}\;\;\mu\left(A\mid B\right)=\mu\left(A\right).
\]
\end{definition}

Using the above definition, one obtains the following result as in the usual probability theory.
\begin{proposition}
Let $\left(X,\sigma\left(X\right),\mu\right)$ be a $\mathbb{H}$-valued-probability
space. Then, for any $A,B\in\sigma\left(X\right)$, 
\[
\mu\left(A\cap B\right)=\mu\left(B\right)\mu\left(A\mid B\right).
\]
\end{proposition}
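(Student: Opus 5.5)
The plan is to reduce the identity $\mu(A\cap B)=\mu(B)\mu(A\mid B)$ to two classical identities, one in each idempotent component. Write $\nu_{+}=\mu_{1}+\mu_{2}$ and $\nu_{-}=\mu_{1}-\mu_{2}$. By the definition of an $\mathbb{H}$-valued probability these are non-negative measures, and every set $C\in\sigma(X)$ satisfies
\[
\mu(C)=\nu_{+}(C)e_{+}+\nu_{-}(C)e_{-}.
\]
The idempotent relations $e_{+}^{2}=e_{+}$, $e_{-}^{2}=e_{-}$ and $e_{+}e_{-}=0$ make multiplication in $\mathbb{H}$ componentwise:
\[
(\alpha e_{+}+\beta e_{-})(\gamma e_{+}+\delta e_{-})=\alpha\gamma e_{+}+\beta\delta e_{-}.
\]
So
\[
\mu(B)\mu(A\mid B)=\nu_{+}(B)\,\nu_{+}(A\mid B)\,e_{+}+\nu_{-}(B)\,\nu_{-}(A\mid B)\,e_{-},
\]
and it suffices to show $\nu_{\pm}(B)\,\nu_{\pm}(A\mid B)=\nu_{\pm}(A\cap B)$ for each sign separately. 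Since $\{e_{+},e_{-}\}$ is a basis of $\mathbb{H}$ over $\mathbb{R}$, the hyperbolic identity is equivalent to these two real identities.

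In each component where $\nu_{\pm}(B)>0$, the definition gives $\nu_{\pm}(A\mid B)=\nu_{\pm}(A\cap B)/\nu_{\pm}(B)$. Multiplying by $\nu_{\pm}(B)$ yields $\nu_{\pm}(A\cap B)$ immediately.

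The only point needing care is the degenerate case, where a denominator vanishes. If $\nu_{\pm}(B)=0$, monotonicity of the non-negative measure $\nu_{\pm}$ forces $\nu_{\pm}(A\cap B)=0$. The product $\nu_{\pm}(B)\,\nu_{\pm}(A\mid B)$ is then $0$, whatever finite real value is assigned to the conditional component. This covers both possible conventions:
\begin{itemize}
\item the stated axiom $\mu(B)=0\Rightarrow\mu(A\mid B)=\mu(A)$, which handles the case where both components vanish;
\item the case where only one of $\nu_{+}(B)$, $\nu_{-}(B)$ vanishes, which arises for instance when $\mu(X)=e_{\pm}$.
\end{itemize}
In the second case I would read the conditional component as defined by the same convention, namely as any finite value such as $\nu_{\pm}(A)$. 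Its product with $0$ is still $0$, so both sides agree. Combining the two components through the idempotent decomposition then gives $\mu(A\cap B)=\mu(B)\mu(A\mid B)$ in $\mathbb{H}$.
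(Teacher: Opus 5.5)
Your proof is correct and takes the same route the paper intends. The paper gives no separate argument beyond saying the identity holds ``as in the usual probability theory,'' and that is exactly your reduction: use $e_{+}^{2}=e_{+}$, $e_{-}^{2}=e_{-}$, $e_{+}e_{-}=0$ to pass componentwise to the classical identity for the non-negative measures $\mu_{1}\pm\mu_{2}$. Your handling of the case where only one of $(\mu_{1}\pm\mu_{2})(B)$ vanishes is a useful addition, since the paper's definition leaves that $0/0$ component unspecified and the identity holds for any finite choice there.
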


By the above proposition, one can have independence as in the usual
probability theory.
\begin{definition}
Let $\left(X,\sigma\left(X\right),\mu_{t}\right)$ be a $\mathbb H$-valued-probability
space, and $A,B\in\sigma\left(X\right)$.

(1) $A$ is said to be independent of $B$, if $\mu\left(A\mid B\right)=\mu\left(A\right)$.

(2) $B$ is said to be independent of $A$, if $\mu\left(B\mid A\right)=\mu\left(B\right)$.

(3) $A$ and $B$ are said to be mutually independent, if both $A$
is independent of $B$, and $B$ is independent of $A$.
\end{definition}

The above $\mathbb{H}$-valued probability theory is extended to $\mathbb{D}_{t}$-valued
probability theory for any fixed $t$-scaled hyperbolic numbers $\mathbb{D}_{t}$
of \cite{MR4853153} in \cite{MR4920356,alpay-cho-l2}, for all scales $t\in\mathbb{R}$.
Such an extension allows us to have not only $\mathbb{H}$-valued
probability theory, but also, $\mathbb{C}$-valued probability theory
and $\mathbf{D}$-valued probability theory, where $\mathbf{D}$ denotes the dual numbers.

\subsection{$L^{2}$-Spaces for Hyperbolic Probabilities}

In this subsection, we consider some functional-analytic structure induced
by our $\mathbb{H}$-valued probability spaces. This construction is needed to define integrals in Sections \ref{sec-4} and \ref{sec-5}, and
use Gelfand triple techniques for direct summands in Sections \ref{sec-sto} and \ref{deri}.
Let $\left(X,\sigma\left(X\right),\mu\right)$ be a $\mathbb{H}$-valued probability space satisfying $\mu\left(X\right)\in\left\{ 1,e_{+},e_{-}\right\} $,
and let $\chi_{S}:X\rightarrow\mathbb{H}$ be the characteristic function
of any arbitrarily fixed measurable subset $S\in\sigma\left(X\right)$,
i.e., 
\[
\chi_{S}\left(x\right)\overset{\mathrm{def}}{=}\left\{ \begin{array}{ccc}
1=1+0k &  & \mathrm{if\;}x\in S,\\
\\
0=0+0k &  & \mathrm{otherwise,}
\end{array}\right.
\]
for all $x\in X$. Define a family $\mathcal{F}_{X}$ by 
\[
\mathcal{F}_{X}=\left\{ f:X\rightarrow\mathbb{H}\left|f\textrm{ is a function}\right.\right\} ,
\]
and its subset $\mathcal{M}_{X}$ by 
\[
\mathcal{M}_{X}\overset{\mathrm{def}}{=}\left\{ \underset{S\in\sigma\left(X\right)}{\sum}d_{S}\chi_{S}\in\mathcal{F}_{X}:d_{S}\in\mathbb{H},\;\forall S\in\sigma\left(X\right)\right\} ,
\]
i.e., it is the collection of all measurable $\mathbb{H}$-valued simple
functions on the measurable space $\left(X,\sigma\left(X\right)\right)$.
Remark here that $\sum$ is a finite sum. By regarding all characteristic
functions $\left\{ \chi_{S}:S\in\sigma\left(X\right)\right\} $ generating
the measurable-functional family $\mathcal{M}_{X}$ as their images
$\left\{ 0,1\right\} \subset\mathbb{R}$ in $\mathbb{H}$, define
a morphism, 
\[
\left\Vert .\right\Vert _{\infty}:\mathcal{F}_{X}\rightarrow\mathbb{R},
\]
by 
\[
\left\Vert f\right\Vert _{\infty}\overset{\mathrm{def}}{=}\mathrm{sup}\left\{ \left\Vert f\left(x\right)\right\Vert :x\in X\right\} ,\;\;\;\forall f\in\mathcal{F}_{X},
\]
where $\left\Vert .\right\Vert $ in the right-hand side is the $\mathbb{R}$-norm
on the hyperbolic numbers $\mathbb{H}$ introduced in Section 2.2
above. i.e., 
\[
\left\Vert u+vk\right\Vert =\sqrt{u^{2}+v^{2}},\;\;\;\forall u+vk\in\mathbb{H}.
\]
Then, since $\left\Vert .\right\Vert $ is a well-defined complete
$\mathbb{R}$-norm on $\mathbb{H}$, this morphism $\left\Vert .\right\Vert _{\infty}$
forms a well-defined $\mathbb{R}$-norm on $\mathcal{F}_{X}$, and
hence, a $\mathbb{R}$-norm on $\mathcal{M}_{X}$. Define the $\left\Vert .\right\Vert _{\infty}$-norm-topology
completion $\mathfrak{M}_{X}$ of $\mathcal{M}_{X}$ in $\mathcal{F}_{X}$,
i.e., 
\[
\mathfrak{M}_{X}\overset{\mathrm{def}}{=}\overline{\mathcal{M}_{X}}^{\left\Vert .\right\Vert _{\infty}},\;\;\;\;\mathrm{in\;\;\;}\mathcal{F}_{X},
\]
where $\overline{Y}^{\left\Vert .\right\Vert _{\infty}}$ means the
$\left\Vert .\right\Vert _{\infty}$-norm-topology closure of a subset
$Y$ of $\mathcal{F}_{X}$. Then 
\[
\mathfrak{M}_{X}=\left\{ \underset{S\in\sigma\left(X\right)}{\sum}d_{S}\chi_{S}\in\mathcal{F}_{X}:d_{S}\in\mathbb{H},\;\forall S\in\sigma\left(X\right)\right\} ,
\]
where $\sum$ now is an infinite sum (or, the limit of finite sums
under $\left\Vert .\right\Vert _{\infty}$). Note that this topological
space $\mathfrak{M}_{X}$ forms a well-defined vector space over the
real field $\mathbb{R}$ (in short, a $\mathbb{R}$-vector space)
equipped with the usual functional addition and the $\mathbb{R}$-scalar
product. i.e., $\mathfrak{M}_{X}$ forms a Banach (vector) space over
$\mathbb{R}$ (in short, a $\mathbb{R}$-Banach space). As usual in
measure theory, if $f=\underset{S\in\sigma\left(X\right)}{\sum}d_{S}\chi_{S}\in\mathfrak{M}_{X}$,
then its integral is well-defined to be 
\[
\int_{X}fd\mu=\underset{S\in\sigma\left(X\right)}{\sum}d_{S}\mu\left(S\right)\in\mathbb{H}.
\]
i.e., $\mathfrak{M}_{X}$ forms a $\mathbb{R}$-Banach space of all
$\mu$-integrable $\mathbb{H}$-valued measurable functions on $\left(X,\sigma\left(X\right)\right)$.

Now, define a form, 
\[
\left[,\right]:\mathfrak{M}_{X}\times\mathfrak{M}_{X}\rightarrow\mathbb{H},
\]
by 
\[
\left[f,g\right]\overset{\mathrm{def}}{=}\int_{X}fg^{\circledast}d\mu,\;\;\;\forall f,g\in\mathfrak{M}_{X},
\]
where if $g=\underset{Y\in\sigma\left(X\right)}{\sum}q_{Y}\chi_{Y}\in\mathfrak{M}_{X}$
with $q_{Y}\in\mathbb{H}$, then 
\[
g^{\circledast}=\underset{Y\in\sigma\left(X\right)}{\sum}q_{Y}^{\circledast}\chi_{Y}\in\mathfrak{M}_{X},
\]
where $\circledast$ is the involution on the hyperbolic numbers $\mathbb{H}$. Suppose 
\[
f=\underset{S\in\sigma\left(X\right)}{\sum}d_{S}\chi_{S},\;\;g=\underset{Y\in\sigma\left(X\right)}{\sum}q_{Y}\chi_{Y}\in\mathfrak{M}_{X}.
\]
Then

\bigskip{}

$\;\;\;\;$$\left[f,g\right]=\int_{X}fg^{\circledast}d\mu=\int_{X}\left(\underset{\left(S,Y\right)\in\sigma\left(X\right)^{2}}{\sum}d_{S}q_{Y}^{\circledast}\chi_{S\cap Y}\right)d\mu$

\bigskip{}

$\;\;\;\;\;\;\;\;\;\;\;\;\;\;\;\;\;\;$$=\underset{\left(S,Y\right)\in\sigma\left(X\right)^{2}}{\sum}d_{S}q_{Y}^{\circledast}\mu\left(S\cap Y\right)\in\mathbb{H}$,

\bigskip{}

\noindent implying that 
\[
\left[f,f\right]=\underset{\left(S_{1},S_{2}\right)\in\sigma\left(X\right)^{2}}{\sum}d_{S_{1}}d_{S_{2}}^{\circledast}\mu\left(S_{1}\cap S_{2}\right)\in\mathbb{H},
\]
satisfying the boundedness, 
\[
\left\Vert \left[f,f\right]\right\Vert \leq\left\Vert \underset{\left(S_{1},S_{2}\right)\in\sigma\left(X\right)^{2}}{\sum}d_{S_{1}}d_{S_{2}}^{\circledast}\right\Vert <\left\Vert f\right\Vert _{\infty}^{2}<\infty,
\]
since $f\in\mathfrak{M}_{X}$. By regarding the hyperbolic numbers
$\mathbb{H}$ as $1$-scaled hyperbolics $\mathbb{D}_{1}$ of \cite{MR4920356,alpay-cho-l2}, this form $\left[,\right]$ is a $\mathbb{D}_{1}$-valued
inner product (or, a $\mathbb{H}$-indefinite inner product) on $\mathfrak{M}_{X}$,
satisfying (i) 
\[
\left[f,g\right]\in\mathbb{H},\;\;\;\;\;\forall f,g\in\mathfrak{M}_{X},
\]
(ii) for all $d_{1},d_{2},d_{3}\in\mathbb{H}$ and $f_{1},f_{2},f_{3}\in\mathfrak{M}_{X}$,
\[
\left[d_{1}f_{1}+d_{2}f_{2},f_{3}\right]=d_{1}\left[f_{1},f_{3}\right]+d_{2}\left[f_{2},f_{2}\right]\in\mathbb{H},
\]
and 
\[
\left[f_{1},d_{2}f_{2}+d_{3}f_{3}\right]=d_{2}^{\circledast}\left[f_{1},f_{2}\right]+d_{3}^{\circledast}\left[f_{1},f_{3}\right]\in\mathbb{H},
\]
(iii) one also has 
\[
\left[f_{1},f_{2}\right]=\left[f_{2},f_{1}\right]^{\circledast},\;\;\;\;\forall f_{1},f_{2}\in\mathfrak{M}_{X},
\]
(iv) for any fixed $f\in\mathfrak{M}_{X}$, 
\[
\left[f,g\right]=0=0+0k\in\mathbb{H},\;\;\forall g\in\mathfrak{M}_{X}\Longrightarrow f=0=0+0k\in\mathfrak{M}_{X}.
\]
Now, let's define a (topological) subspace, 
\[
L_{X}^{2}\overset{\mathrm{denote}}{=}L^{2}\left(X,\sigma\left(X\right),\mu\right)\overset{\mathrm{def}}{=}\left\{ f\in\mathfrak{M}_{X}:\int_{X}\left\Vert f\right\Vert ^{2}d\mu<\infty\right\} ,
\]
of $\mathfrak{M}_{X}$, equipped with the inherited $\mathbb{H}$-indefinite
inner product $\left[,\right]$ on $L_{X}^{2}$. Then it forms a well-determined
$\mathbb{R}$-Banach (vector) subspace of $\mathfrak{M}_{X}$. By
definition, one can define a new norm $\left\Vert .\right\Vert _{2}$
on $L_{X}^{2}$ by 
\[
\left\Vert f\right\Vert _{2}\overset{\mathrm{def}}{=}\sqrt{\int_{X}\left\Vert f\right\Vert ^{2}d\mu},\;\;\;\;\;\forall f\in L_{X}^{2}.
\]
Then the inherited functional addition from that on $\mathfrak{M}_{X}$
is closed on $L_{X}^{2}$ under this new norm $\left\Vert .\right\Vert _{2}$
because of the Minkowski's inequality, and the $\mathbb{R}$-scalar
product is closed on $L_{X}^{2}$ canonically. i.e., this $\mathbb{R}$-normed
space $\left(L_{X}^{2},\left\Vert .\right\Vert _{2}\right)$ forms
a new $\mathbb{R}$-Banach space since $\left\Vert .\right\Vert _{2}$
is complete on $L_{X}^{2}$ by the completeness of the $\mathbb{R}$-norm
$\left\Vert .\right\Vert $ on the hyperbolic numbers $\mathbb{H}$.
From below, we understand $L_{X}^{2}$ as a $\mathbb{R}$-Banach space
equipped with the $\mathbb{R}$-norm $\left\Vert .\right\Vert _{2}$.

Now, let $f=f_{1}+f_{2}k\in L_{X}^{2}$ with its $\mathbb{R}$-valued
functions $f_{1}$ and $f_{2}$, equivalently, 
\[
f=\left(f_{1}+f_{2}\right)e_{+}+\left(f_{1}-f_{2}\right)e_{-}\overset{\mathrm{denote}}{=}f_{+}e_{+}+f_{-}e_{-},
\]
where $f_{+}$ and $f_{-}$ are $\mathbb{R}$-valued measurable functions
whenever our given $\mathbb{H}$-valued measure $\mu$ satisfies 
\[
\mu=\mu_{+}e_{+}+\mu_{-}e_{-},
\]
where $\mu_{+}$ and $\mu_{-}$ are bounded non-negative $\mathbb{R}$-valued
measures. Since 
\[
\mathbb{H}=\mathbb{R}e_{+}\oplus\mathbb{R}e_{-},
\]
under 
\[
e_{+}^{2}=e_{+},\;\;\;e_{-}^{2}=e_{-},\;\;\;\mathrm{and\;\;\;}e_{+}e_{-}=0=e_{-}e_{+},
\]
in $\mathbb{H}$, if $f=f_{+}e_{+}+f_{-}e_{-}\in L_{X}^{2}$, then
\[
f_{+}\in L^{2}\left(X,\sigma\left(X\right),\mu_{+}\right),\;\;\;\mathrm{and\;\;\;}f_{-}\in L^{2}\left(X,\sigma\left(X\right),\mu_{-}\right),
\]
where $L^{2}\left(X,\sigma\left(X\right),\mu_{e}\right)$ are the
classical $L^{2}$-space which are $\mathbb{R}$-Hilbert spaces equipped
with their $\mathbb{R}$-definite inner products, 
\[
\left\langle f,g\right\rangle _{2}=\int_{X}fgd\mu_{e},\;\;\;\;\forall f,g\in L^{2}\left(X,\sigma\left(X\right),\mu_{e}\right),
\]
for all $e\in\left\{ \pm\right\} $. Conversely, if $f=f_{+}e_{+}+f_{-}e_{-}\in\mathfrak{M}_{X}$,
and 
\[
f_{+}\in L^{2}\left(X,\sigma\left(X\right),\mu_{+}\right),\;\;\;f_{-}\in L^{2}\left(X,\sigma\left(X\right),\mu_{-}\right),
\]
then 
\[
f\in L_{X}^{2}=L^{2}\left(X,\sigma\left(X\right),\mu\right),
\]
whenever 
\[
\mu=\mu_{+}e_{+}+\mu_{-}e_{-}\textrm{ is a }\mathbb{H}\textrm{-valued measure}.
\]
Therefore, without loss of generality, one can get that 
\[
L_{X}^{2}=L^{2}\left(X,\sigma\left(X\right),\mu\right)=L^{2}\left(X,\sigma\left(X\right),\mu_{+}\right)e_{+}\oplus L^{2}\left(X,\sigma\left(X\right),\mu_{-}\right)e_{-}.
\]

\begin{theorem}
Suppose $L_{X}^{2}=L^{2}\left(X,\sigma\left(X\right),\mu\right)$
is the $\mathbb{R}$-Banach space on a $\mathbb{H}$-valued probability
space $\left(X,\sigma\left(X\right),\mu\right)$ with a $\mathbb{H}$-valued
probability $\mu=\mu_{+}e_{+}+\mu_{-}e_{-}$. Then 
\[
L_{X}^{2}=L^{2}\left(X,\sigma\left(X\right),\mu_{+}\right)e_{+}\oplus L^{2}\left(X,\sigma\left(X\right),\mu_{-}\right)e_{-},
\]
where $L^{2}\left(X,\sigma\left(X\right),\mu_{e}\right)$ are the
usual $L^{2}$-Hilbert spaces equipped with its $\mathbb{R}$-definite
inner product $\left\langle ,\right\rangle _{2}$ for all $e\in\left\{ \pm\right\} $.
\end{theorem}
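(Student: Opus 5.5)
The plan is to use the idempotent decomposition of $\mathbb{H}$ to split every element of $L_X^2$ into an $e_+$ part and an $e_-$ part, and to compare the norm $\Vert\cdot\Vert_2$ with the two classical $L^2$ norms.

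First, for $f=f_1+f_2k\in\mathfrak{M}_X$, set $f_+=f_1+f_2$ and $f_-=f_1-f_2$, so that $f=f_+e_++f_-e_-$. The map $f\mapsto(f_+,f_-)$ is an $\mathbb{R}$-linear bijection from $\mathbb{H}$-valued measurable functions onto pairs of real measurable functions, with inverse $(g,h)\mapsto ge_++he_-$. This works because $\mathbb{H}=\mathbb{R}e_+\oplus_{\mathbb{R}}\mathbb{R}e_-$ and $\mathbb{R}e_+\cap\mathbb{R}e_-=\{0\}$.

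Next comes the pointwise norm identity. For $p=\lambda_1e_++\lambda_2e_-$ one has $a=(\lambda_1+\lambda_2)/2$ and $b=(\lambda_1-\lambda_2)/2$, hence
\[
\Vert p\Vert^2=a^2+b^2=\tfrac12(\lambda_1^2+\lambda_2^2).
\]
Therefore $\Vert f(x)\Vert^2=\frac12\left(f_+(x)^2+f_-(x)^2\right)$ pointwise. I then integrate against $\mu$ using the definition of $\int_X\cdot\,d\mu$ through the idempotent components. The real function $\Vert f\Vert^2$, viewed as $\Vert f\Vert^2e_++\Vert f\Vert^2e_-$, has integral
\[
\Bigl(\int_X\Vert f\Vert^2d\mu_+\Bigr)e_++\Bigl(\int_X\Vert f\Vert^2d\mu_-\Bigr)e_-.
\]
Its finiteness, understood as finiteness in both components, is therefore equivalent to
\[
\int_X(f_+^2+f_-^2)\,d\mu_\pm<\infty.
\]

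This step is the main obstacle: the condition $\int_X\Vert f\Vert^2d\mu<\infty$ must be read so that $f_+$ is only required to be square integrable for $\mu_+$, and $f_-$ for $\mu_-$. I would handle it by giving $L^2_X$ the norm $\Vert f\Vert_2^2=\int_X f_+^2\,d\mu_++\int_X f_-^2\,d\mu_-$. This is the norm coming from the scalar parts of $[f,f]=\int_X ff^\circledast d\mu$, since
\[
ff^\circledast=f_+^2e_++f_-^2e_-\cdot(\text{with }e_\pm^\circledast=e_\mp)\ldots
\]
More precisely, I would compute $[f,f]$ componentwise using $e_+e_-=0$. With this reading, $f\in L^2_X$ holds exactly when $f_+\in L^2(X,\sigma(X),\mu_+)$ and $f_-\in L^2(X,\sigma(X),\mu_-)$, which is the set equality in the theorem. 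The two summands are then separated by $e_+e_-=0$, and each is the classical real Hilbert space with $\langle g,h\rangle_2=\int_X gh\,d\mu_e$. The sum is direct because the decomposition in the first step is unique. Completeness and equivalence of norms follow from the norm identity, which yields the topological direct sum.

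Finally, the degenerate cases $\mu(X)=e_+$ or $\mu(X)=e_-$ are handled by the same computation. There one of $\mu_\pm$ is the zero measure, and the corresponding summand is the trivial space (modulo null functions).
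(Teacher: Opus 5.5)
Your idempotent decomposition $f=f_{+}e_{+}+f_{-}e_{-}$ and the pointwise identity $\Vert f(x)\Vert^{2}=\frac{1}{2}\left(f_{+}(x)^{2}+f_{-}(x)^{2}\right)$ are correct. You also correctly find that finiteness of $\int_X\Vert f\Vert^{2}d\mu$ in both idempotent coordinates means $f_{+},f_{-}\in L^{2}(\mu_{+})\cap L^{2}(\mu_{-})$, where $L^{2}(\mu_{\pm})$ stands for $L^{2}(X,\sigma(X),\mu_{\pm})$.

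The gap is the next step. Instead of deriving $f_{\pm}\in L^{2}(\mu_{\pm})$, you equip $L^{2}_X$ by fiat with the norm $\int_X f_{+}^{2}\,d\mu_{+}+\int_X f_{-}^{2}\,d\mu_{-}$, and that is the conclusion itself. The justification you give for this norm is wrong. Since $e_{\pm}^{\circledast}=e_{\mp}$, one has $f^{\circledast}=f_{+}e_{-}+f_{-}e_{+}$ and $ff^{\circledast}=f_{+}f_{-}=f_{1}^{2}-f_{2}^{2}$. Hence $[f,f]=\left(\int_X f_{+}f_{-}\,d\mu_{+}\right)e_{+}+\left(\int_X f_{+}f_{-}\,d\mu_{-}\right)e_{-}$, which mixes the two components and is indefinite (for the constant function $k$ one gets $[k,k]=-\mu(X)$). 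Your claim that equivalence of norms ``follows from the norm identity'' also fails. That identity compares $\Vert f\Vert^{2}$ with $f_{+}^{2}+f_{-}^{2}$ under one and the same measure, whereas your norm integrates $f_{+}$ only against $\mu_{+}$ and $f_{-}$ only against $\mu_{-}$.

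No argument can close this step for the space as literally defined. Take $X=\mathbb{R}$, $\mu_{+}$ the Cauchy law, $\mu_{-}$ the standard Gaussian law, and $f(x)=x\,e_{-}$. Then $f_{+}=0$ and $f_{-}\in L^{2}(\mu_{-})$, yet the $e_{+}$-coordinate of $\int_X\Vert f\Vert^{2}d\mu$ is $\frac{1}{2}\int_{\mathbb{R}}x^{2}\,d\mu_{+}(x)=\infty$. Moreover, $L^{2}_X$ is defined inside $\mathfrak{M}_X$, whose elements are bounded, so some completion is needed in any case.

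What you should do is state the reinterpretation explicitly and tie it to the correct $\mathbb{H}$-valued object. That object is the bilinear form without involution, $\int_X fg\,d\mu=\langle f_{+},g_{+}\rangle_{2}e_{+}+\langle f_{-},g_{-}\rangle_{2}e_{-}$. Define $\Vert f\Vert_{2}^{2}=\mathrm{Tr}\int_X f^{2}\,d\mu=\int_X f_{+}^{2}\,d\mu_{+}+\int_X f_{-}^{2}\,d\mu_{-}$ (the sum of the two idempotent coordinates) and complete. Then the set equality is immediate, each summand is the classical Hilbert space, and $e_{+}e_{-}=0$ makes the summands orthogonal for this form. Alternatively, keep the literal definition and add a hypothesis such as $c\,\mu_{-}\le\mu_{+}\le C\,\mu_{-}$ with $0<c\le C$. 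Under it, your norm identity does yield the equality with equivalent norms, after the same completion.

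For comparison, the paper does not settle this step either. It asserts the equivalence ``without loss of generality'' just before the theorem. Its proof only checks $[f_{+}e_{+},f_{-}e_{-}]=0$ via $e_{+}e_{-}=0$. Like your computation, that is valid for $\int_X fg\,d\mu$ but not for $\int_X fg^{\circledast}d\mu$, since $(f_{-}e_{-})^{\circledast}=f_{-}e_{+}$.
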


\begin{proof}
Let $f=f_{+}e_{+}+f_{-}e_{-}\in L_{X}^{2}$, and let $f_{1}=f_{+}e_{+}$
and $f_{2}=f_{-}e_{-}$ in $L_{X}^{2}$. Then 
\[
\left[f_{1},f_{2}\right]=\int_{X}f_{1}f_{2}^{\circledast}d\mu=\int_{X}f_{+}f_{-}e_{+}e_{-}d\mu=\int_{X}\left(0+0k\right)d\mu=0,
\]
in $\mathbb{H}$. i.e., $f_{1}$ and $f_{2}$ are orthogonal under
$\left[,\right]$ in $L_{X}^{2}$.
\end{proof}
By the above theorem, without loss of generality, one can represent every element $f\in L_{X}^{2}$ as $f_{+}e_{+}+f_{-}e_{-}\in\mathfrak{M}_{X}$
with 
\[
f_{+}\in L^{2}\left(X,\sigma\left(X\right),\mu_{+}\right),\;\;\mathrm{and\;\;}f_{-}\in L^{2}\left(X,\sigma\left(X\right),\mu_{-}\right),
\]
whenever $\mu=\mu_{+}e_{+}+\mu_{-}e_{-}$. So, if $f=f_{+}e_{+}+f_{-}e_{-}$
and $g=g_{+}e_{+}+g_{-}e_{-}$ are in $L_{X}^{2}$, then 
\[
\left[f,g\right]=\int_{X}fg^{\circledast}d\mu=\left(\int_{X}f_{+}g_{+}d\mu_{+}\right)e_{+}-\left(\int_{X}f_{-}g_{-}d\mu_{-}\right)e_{-},
\]
i.e., 
\[
\left[f,g\right]=\left\langle f_{+},g_{+}\right\rangle _{2}e_{+}+\left\langle f_{-},g_{-}\right\rangle _{2}e_{-}.
\]
in $\mathbb{H}$, since $e_{+}^{2}=e_{+}$, $e_{-}^{2}=e_{-}$ and
$e_{+}e_{-}=0=e_{-}e_{+}$ in $\mathbb{H}$.

\bigskip{}

\begin{remark} As we have seen in \cite{MR4920356,alpay-cho-l2},
the above decomposition of $L_{X}^{2}$ holds only when $\mu$ is
a $\mathbb{D}_{1}=\mathbb{H}$-valued probability, (or, $\mathbb{D}_{t}$-valued
probability with a positive scale $t>0$ up to isomorphisms). If a
given scale $t\leq0$, then $\mathbb{D}_{t}$-valued probability $\mu_{t}$
does not give such a decomposition of $L^{2}\left(X,\sigma\left(X\right),\mu_{t}\right)$.
\end{remark}
\bigskip{}

\begin{corollary}
Let $\left(X,\sigma\left(X\right),\mu\right)$ be a $\mathbb{H}$-valued
probability space, and let $L_{X}^{2}=L^{2}\left(X,\sigma\left(X\right),\mu\right)$
be the $\mathbb{R}$-Banach space equipped with the $\mathbb{H}$-indefinite
inner product $\left[,\right]$. If 
\[
f=f_{+}e_{+}+f_{-}e_{-},\;\;g=g_{+}e_{+}+g_{-}e_{-}\in L_{X}^{2},
\]
then 
\[
\left[f,g\right]=\left\langle f_{+},g_{+}\right\rangle _{2}e_{+}+\left\langle f_{-},g_{-}\right\rangle _{2}e_{-}\in\mathbb{H},
\]
where $\left\langle ,\right\rangle _{2}$ is the $\mathbb{R}$-definite
inner product on the usual $L^{2}$-spaces $L^{2}\left(X,\sigma\left(X\right),\mu_{e}\right)$
for all $e\in\left\{ \pm\right\} $. So, this $\mathbb{R}$-Banach
space $L_{X}^{2}$ is $\mathbb{H}$-indefinite-inner-product $\mathbb{H}$-bi-module
in the sense of \cite{MR5000850}.
\end{corollary}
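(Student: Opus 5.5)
The plan is to derive the formula from the orthogonal decomposition in the preceding Theorem. I would expand $[f,g]$ by the sesquilinearity properties (ii)--(iii) of $[\,,]$ listed before $L^2_X$ was introduced, and then evaluate each resulting piece using the idempotent calculus $e_\pm^2=e_\pm$, $e_+e_-=0$, $e_+^{\circledast}=e_-$. One point is not routine and I flag it now: read literally, the conjugation swaps $e_+$ and $e_-$, so an unresolved reading of $g^{\circledast}$ would pair $f_+$ with $g_-$ rather than with $g_+$.

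\emph{Step 1 (simple functions).} I first take $f,g\in\mathcal M_X$, written as $f=f_+e_++f_-e_-$ and $g=g_+e_++g_-e_-$ with real simple $f_\pm,g_\pm$ built on common sets $S$. Then $g^{\circledast}=\sum_S q_S^{\circledast}\chi_S$. The integral is $\int_X h\,d\mu=\sum_S d_S\mu(S)$ with $\mu(S)=\mu_+(S)e_++\mu_-(S)e_-$. Splitting into four terms, I write
\[
[f,g]=[f_+e_+,g_+e_+]+[f_+e_+,g_-e_-]+[f_-e_-,g_+e_+]+[f_-e_-,g_-e_-].
\]
The preceding Theorem asserts that the two mixed terms vanish, because the $e_+$- and $e_-$-summands are $[\,,]$-orthogonal. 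For the diagonal terms I would follow the computation displayed just before the Corollary, which reads $[f,g]=(\int f_+g_+d\mu_+)e_+\pm(\int f_-g_-d\mu_-)e_-$. That is, I would adopt the paper's convention that $\circledast$ acts on an element of $L^2_X$ by conjugating its real coordinates $f_\pm$ (which are real, hence fixed) and not the idempotents. Under this convention the products $f_\pm g_\pm e_\pm$ land in the $e_\pm$-slot, and integrating against $\mu_\pm$ gives $\langle f_\pm,g_\pm\rangle_2 e_\pm$ on simple functions.

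\emph{Step 2 (the obstacle).} The main obstacle is justifying that convention, and I will be explicit that it is an assumption. With the literal rule $e_+^{\circledast}=e_-$ one gets
\[
f g^{\circledast}=f_+g_-e_++f_-g_+e_-,
\]
which would produce $\langle f_+,g_-\rangle e_++\langle f_-,g_+\rangle e_-$. The same literal rule also contradicts the orthogonality claimed in the Theorem's proof. I would first try to reconcile the two by checking, on $f=g=\chi_X e_+$, which reading makes the Theorem (orthogonality of the summands) and property (iv) simultaneously true. It is exactly the coordinatewise reading, so I would fix it as the definition of $\circledast$ on $L^2_X$. The Corollary is then the statement consistent with the Theorem, and I make no further attempt to derive it from the literal involution.

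\emph{Step 3 (density).} Finally I pass from simple functions to all of $L^2_X$. By the Theorem, $\|f\|_2$ controls $\|f_\pm\|_{L^2(\mu_\pm)}$. Both sides of the identity are continuous in each argument: the right side by Cauchy--Schwarz in each $L^2(\mu_\pm)$, and the left side by the bound $\|[f,g]\|\le C\|f\|_2\|g\|_2$ that follows from the componentwise form on simple functions. Approximating $f_\pm,g_\pm$ by simple functions in $L^2(\mu_\pm)$ then extends the identity. The bi-module remark follows because $d\,f=(d_+f_+)e_++(d_-f_-)e_-$ for $d=d_+e_++d_-e_-\in\mathbb H$. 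This makes $[\,,]$ $\mathbb H$-linear in the first variable and conjugate-linear in the second, which are the axioms of \cite{MR5000850}.
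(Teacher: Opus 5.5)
Your argument for the formula follows the paper's route, with the convention made explicit. The paper's proof just refers back to the computation before the Corollary. That computation splits $f,g$ by the preceding Theorem, multiplies out with $e_\pm^2=e_\pm$ and $e_+e_-=0$, and tacitly lets $\circledast$ fix $e_\pm$; its proof of the Theorem writes $f_1f_2^{\circledast}=f_+f_-e_+e_-$. The bi-module part is then only $df=(d_+f_+)e_++(d_-f_-)e_-$ and $fd=df$. You are right that the literal involution gives $[f,g]=\bigl(\int_X f_+g_-\,d\mu_+\bigr)e_++\bigl(\int_X f_-g_+\,d\mu_-\bigr)e_-$, so the stated formula needs the coordinatewise reading; saying so is an improvement on the paper. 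Your density Step 3 is not needed. The integral of a measurable $\mathbb{H}$-valued function is defined componentwise, and $f_\pm g_\pm\in L^1(\mu_\pm)$ by Cauchy--Schwarz, so the identity holds directly on all of $L^2_X$.

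The error is your closing claim that $[\,,]$ is conjugate-linear in the second variable. The formula you have just proved forces
\[
[f,dg]=d_+\langle f_+,g_+\rangle_2e_++d_-\langle f_-,g_-\rangle_2e_-=d\,[f,g],\qquad [f,g]=[g,f],
\]
so the form is symmetric and $\mathbb{H}$-bilinear. For $d=e_+$ you get $[f,e_+g]=\langle f_+,g_+\rangle_2e_+$, whereas $e_+^{\circledast}[f,g]=e_-[f,g]=\langle f_-,g_-\rangle_2e_-$. Hence the second half of property (ii) and property (iii), as listed in the paper, are incompatible with the Corollary's formula however $g^{\circledast}$ is interpreted. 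They cannot certify the bi-module structure. Nor should you invoke them in Step 1, where plain additivity suffices. State only what is actually verified, which is all the paper's proof checks: $dL^2_X\subseteq L^2_X$ for $d\in\mathbb{H}$, $fd=df$, and $\mathbb{H}$-linearity of $[\,,]$ in each variable.

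A smaller point: your test case $f=g=\chi_Xe_+$ does not discriminate through (iv), since non-degeneracy holds under both readings. What it shows is that the literal reading gives $[f,f]=0$, against the formula's value $e_+$ (as $\mu_+(X)=1$). The Theorem's orthogonality is tested instead by $[\chi_Xe_+,\chi_Xe_-]$, which equals $e_+$ under the literal reading and $0$ under yours.
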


\begin{proof}
The formula for the $\mathbb{H}$-indefinite inner product is shown
by the very above paragraph up to the decomposition of $L_{X}^{2}$.
It is not hard to check that 
\[
d\in\mathbb{H},\;\;f\in L_{X}^{2}\Longrightarrow df\in L_{X}^{2},
\]
because if $d=d_{+}e_{+}+d_{-}e_{-}\in\mathbb{H}$ with $d_{+},d_{-}\in\mathbb{R}$,
and $f=f_{+}e_{+}+f_{-}e_{-}\in L_{X}^{2}$ with $f_{e}\in L^{2}\left(X,\sigma\left(X\right),\mu_{e}\right)$
for all $e\in\left\{ \pm\right\} $, then 
\[
df=\left(d_{+}f_{+}\right)e_{+}+\left(d_{-}f_{-}\right)e_{+}\in L_{X}^{2},
\]
and similarly, 
\[
fd=df\in L_{X}^{2}.
\]
And hence, $L_{X}^{2}$ is a well-defined bi-module over $\mathbb{H}$,
i.e., a $\mathbb{H}$-bi-module.
\end{proof}

\section{Positive definite kernels}

\setcounter{equation}{0} \label{sec-3} We consider here positive
definite kernels of a very specific form, namely (with $p=\lambda_1e_++\lambda_2e_-$ and $q=\mu_1e_++\mu_2e_-$)
\begin{equation}
K(p,q)=U\begin{pmatrix}K_{1}(\lambda_{1},\mu_{1}) & 0\\
0 & K_{2}(\lambda_{2},\mu_{2})
\end{pmatrix}U,\label{K1-K2}
\end{equation}
where $K_{1}$ and $K_{2}$ are real-valued and positive definite
on some subset $S$ of the real numbers, and $p,q\in\Omega$, where
\[
\Omega=\left\{ \lambda_{1}e_{+}+\lambda_{2}e_{-}\,\,\,;\,\,\,\lambda_{1},\lambda_{2}\in S\,\right\} .
\]

Let $\mathfrak{H}(K_{1})$ and $\mathfrak{H}(K_{2})$ be the reproducing
kernel Hilbert spaces on the real numbers with reproducing kernels
$K_{1}$ and $K_{2}$ respectively. By the Zaremba-Bergman formula
we write 
\begin{eqnarray}
K_{1}(\lambda_{1},\mu_{1}) & = & \sum_{a\in A}f_{a}^{(1)}(\lambda_{1})f_{a}^{(1)}(\mu_{1})\\
K_{2}(\lambda_{2},\mu_{2}) & = & \sum_{b\in B}f_{b}^{(2)}(\lambda_{2})f_{b}^{(2)}(\mu_{2})
\end{eqnarray}
where $(f_{a}^{(1)})_{a\in A}$ denotes any orthonormal basis of $\mathfrak{H}(K_{1})$
and similarly, $(f_{b}^{(2)})_{b\in B}$ denotes any orthonormal basis
of $\mathfrak{H}(K_{2})$. One has

\[
\begin{split}K(p,q) & =\left(\sum_{a\in A}f_{a}^{(1)}(\lambda_{1})f_{a}^{(1)}(\mu_{1})\right)U\begin{pmatrix}1 & 0\\
0 & 0
\end{pmatrix}U+\\
 & \hspace{5mm}+\left(\sum_{b\in B}f_{b}^{(2)}(\lambda_{2})f_{a}^{(2)}(\mu_{2})\right)U\begin{pmatrix}0 & 0\\
0 & 1
\end{pmatrix}U\\
 & =\left(\sum_{a\in A}f_{a}^{(1)}(\lambda_{1})f_{a}^{(1)}(\mu_{1})\right)\frac{1}{2}\begin{pmatrix}1 & 1\\
1 & 1
\end{pmatrix}+\\
 & \hspace{5mm}+\left(\sum_{b\in B}f_{b}^{(2)}(\lambda_{2})f_{a}^{(2)}(\mu_{2})\right)\frac{1}{2}\begin{pmatrix}1 & -1\\
-1 & 1
\end{pmatrix}\\
 & =\sum_{a\in A}F_{a}^{(1)}(\lambda_{1})F_{a}^{(1)}(\mu_{1})+\sum_{b\in B}F_{b}^{(2)}(\lambda_{2})F_{b}^{(2)}(\mu_{2})
\end{split}
\]
where 
\[
F_{a}^{(1)}(\lambda)=f_{a}^{(1)}(\lambda)\frac{1}{2}\begin{pmatrix}1 & 1\\
1 & 1
\end{pmatrix}\quad{\rm and}\quad F_{a}^{(2)}(\lambda)=f_{b}^{(2)}(\lambda)\frac{1}{2}\begin{pmatrix}1 & -1\\
-1 & 1
\end{pmatrix}.
\]
It follows that the reproducing kernel Hilbert space $\mathfrak{H}(K)$
(on the real numbers) with reproducing kernel \eqref{K1-K2} can be
described as the set of $\mathbb{H}$-valued functions of the form
\begin{equation}
F(p)=\sum_{a\in A}F_{a}^{(1)}(\lambda_{1})A_{a}+\sum_{b\in B}F_{b}^{(2)}(\lambda_{2})B_{b},\quad A_{a},B_{b}\in\mathbb{R},
\end{equation}
with $\mathbb{H}$ norm 
\[
[F,F]=\left(\sum_{a\in A}A_{a}^{2}\right)e_{+}+\left(\sum_{b\in B}B_{b}^{2}\right)e_{-}
\]
and norm 
\begin{equation}
\|F\|^{2}={\rm Tr}\,[F,F]=\sum_{a\in A}A_{a}^{2}+\sum_{b\in B}B_{b}^{2}.
\end{equation}

We note that we do not consider here the case where $K_{1}$ or $K_{2}$
are complex-valued. Then, one has to use bi-complex numbers rather
than hyperbolic numbers. To keep unity in the paper, we have chosen
not to use bi-complex numbers here.

\section{$\mathbb{H}$-valued probabilities and random variables}
\label{sec-4}
\setcounter{equation}{0} In this section we follow \cite{MR3651492}
and review a number of facts on $\mathbb{H}$-valued probabilities
(note that the algebra of hyperbolic numbers is denoted by $\mathbb{D}$
in \cite{MR3651492}). Let $(\Omega,\mathcal{C})$ be a measure space
and let $P_{1}$ and $P_{2}$ be two probability measures on $(\Omega,\mathcal{C})$.
We build the $\mathbb{H}$-valued probability 
\begin{equation}
P_{\mathbb{H}}=U\begin{pmatrix}P_{1} & 0\\
0 & P_{2}
\end{pmatrix}U.\label{6789!!!}
\end{equation}
We note that $P_{\mathbb{H}}$ takes values in the unit interval $\mathbb{I}$
of the hyperbolic numbers. It is readily seen that $P_{\mathbb{H}}$
satisfies the properties of an $\mathbb{H}$-valued probability, as
presented in \cite{MR3651492}. In that paper the cases $P_{1}=0$
or $P_{2}=0$ are allowed, but will not be considered here.

If $\left(X,\sigma\left(X\right),\mu\right)$ is a $\mathbb{H}$-valued
probability space with its $\mathbb{H}$-valued probability, 
\[
\mu=\mu_{+}e_{+}+\mu_{-}e_{-},
\]
where $\mu_{+}$ and $\mu_{-}$ are the usual bounded non-negative
($\mathbb{R}$-valued) measures on the measurable space $\left(X,\sigma\left(X\right)\right)$,
satisfying 
\[
\mu\left(X\right)=1=e_{+}+e_{-},\;\;\mathrm{or\;\;}\mu\left(X\right)=e_{+},\;\;\mathrm{or\;\;}\mu\left(X\right)=e_{-}.
\]
So, it is possible that 
\[
\mathrm{either\;\;}\mu=\mu_{+}e_{+}+Oe_{-},\;\;\mathrm{or\;\;}\mu=Oe_{+}+\mu_{-}e_{-},
\]
if and only if 
\[
\mu_{+}\left(X\right)=1,\;\;\;\;\mathrm{respectively,\;\;\;}\mu_{-}\left(X\right)=1,
\]
where $O:\sigma\left(X\right)\rightarrow\mathbb{R}$ is the ($\mathbb{R}$-valued)
zero measure, i.e., $O\left(S\right)=0$, for all $S\in\sigma\left(X\right)$,
which is a well-defined bounded non-negative $\mathbb{R}$-valued measure,
if and only if 
\[
\mu_{+}\textrm{ is a classical }\mathbb{R}\textrm{-valued probability on }\left(X,\sigma\left(X\right)\right),\textrm{ and }\mu_{-}=O,
\]
respectively, 
\[
\mu_{+}=O,\;\mathrm{and\;}\mu_{-}\textrm{ is a classical }\mathbb{R}\textrm{-valued probability on }\left(X,\sigma\left(X\right)\right),
\]
for a given $\mathbb{H}$-valued probability $\mu=\mu_{+}e_{+}+\mu_{-}e_{-}$.
e.g., see Section 8 of \cite{MR4920356} by replacing the general scale
$t\in\mathbb{R}$ to the fixed scale $1$.

If we restrict our interests to the cases where either 
\[
\mu_{+}\textrm{ is a classical probability, and }\mu_{-}=O,
\]
or 
\[
\mu_{+}=O,\;\textrm{and }\mu_{-}\textrm{ is a classical probability,}
\]
making $\mu=\mu_{+}e_{+}+\mu_{-}e_{-}$ be a $\mathbb{H}$-valued
probability on $\left(X,\sigma\left(X\right)\right)$, then one obtains
that 
\[
L^{2}\left(X,\sigma\left(X\right)\mu\right)=L^{2}\left(X,\sigma\left(X\right),\mu_{+}\right)e_{+}\oplus L^{2}\left(X,\sigma\left(X\right),O\right)e_{-},
\]
respectively, 
\[
L^{2}\left(X,\sigma\left(X\right)\mu\right)=L^{2}\left(X,\sigma\left(X\right),O\right)e_{+}\oplus L^{2}\left(X,\sigma\left(X\right),\mu_{-}\right)e_{-},
\]
because 
\[
L^{2}\left(X,\sigma\left(X\right),\mu\right)=L^{2}\left(X,\sigma\left(X\right),\mu_{+}\right)e_{+}\oplus L^{2}\left(X,\sigma\left(X\right),\mu_{-}\right)e_{-}.
\]
So, if $f=f_{+}e_{+}+f_{-}e_{-}\in L^{2}\left(X,\sigma\left(X\right),\mu\right)$,
and if either 
\[
\mu_{+}\textrm{ is a classical probability, and }\mu_{-}=O,
\]
or 
\[
\mu_{+}=O,\;\textrm{and }\mu_{-}\textrm{ is a classical probability,}
\]
then 
\[
\int_{X}fd\mu=\left(\int_{X}f_{+}d\mu_{+}\right)e_{+}+0e_{-}=\left(\int_{X}f_{+}d\mu_{+}\right)e_{+},
\]
respectively, 
\[
\int_{X}fd\mu=0e_{+}+\left(\int_{X}f_{-}d\mu_{-}\right)e_{-}=\left(\int_{X}f_{-}d\mu_{-}\right)e_{-},
\]
in the hyperbolic numbers $\mathbb{H}$.
\begin{corollary}
Let $\left(X,\sigma\left(X\right),\mu\right)$ be a $\mathbb{H}$-valued
probability space with its $\mathbb{H}$-valued probability $\mu=\mu_{+}e_{+}+\mu_{-}e_{-}$,
where $\mu_{e}$ are bounded non-negative $\mathbb{R}$-valued measures
for all $e\in\left\{ \pm\right\} $, and let $L^{2}\left(X,\sigma\left(X\right),\mu\right)$
be the corresponding $\mathbb{R}$-Banach space equipped with its
$\mathbb{H}$-indefinite inner product $\left[,\right]$. Suppose
either 
\[
\mu_{+}\textrm{ is a classical probability, and }\mu_{-}=O,
\]
or 
\[
\mu_{+}=O,\;\textrm{and }\mu_{-}\textrm{ is a classical probability,}
\]
and hence, 
\[
\mu=\mu_{+}e_{+}+Oe_{-},\;\;\mathrm{respectively,\;\;}\mu=Oe_{+}+\mu_{-}e_{-}.
\]
Then
\[
L^{2}\left(X,\sigma\left(X\right),\mu\right)=L^{2}\left(X,\sigma\left(X\right),\mu_{+}\right)e_{+}\oplus L^{2}\left(X,\sigma\left(X\right),O\right)e_{-},
\]
respectively,
\[
L^{2}\left(X,\sigma\left(X\right),\mu\right)=L^{2}\left(X,\sigma\left(X\right),O\right)e_{+}\oplus L^{2}\left(X,\sigma\left(X\right),\mu_{-}\right)e_{-}.
\]
and hence, if $f=f_{+}e_{+}+f_{-}e_{-}$ and $g=g_{+}e_{+}+g_{-}e_{-}$
in $L^{2}\left(X,\sigma\left(X\right),\mu\right)$, then 
\[
\left[f,g\right]=\left\langle f_{+},g_{+}\right\rangle _{2}e_{+}+0e_{-}=\left(\int_{X}f_{+}g_{+}d\mu_{+}\right)e_{+}\in\mathbb{H},
\]
respectively,
\[
\left[f,g\right]=0e_{+}+\left\langle f_{-},g_{-}\right\rangle _{2}e_{-}=\left(\int_{X}f_{-}g_{-}d\mu_{-}\right)e_{-}\in\mathbb{H}.
\]
\end{corollary}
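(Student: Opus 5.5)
The corollary is a direct specialization of the earlier Theorem, which gives
\[
L^{2}\left(X,\sigma\left(X\right),\mu\right)=L^{2}\left(X,\sigma\left(X\right),\mu_{+}\right)e_{+}\oplus L^{2}\left(X,\sigma\left(X\right),\mu_{-}\right)e_{-},
\]
together with the preceding Corollary computing $[f,g]=\langle f_{+},g_{+}\rangle_{2}e_{+}+\langle f_{-},g_{-}\rangle_{2}e_{-}$. The plan is to substitute the degenerate measure into both results and check that nothing in their derivations breaks when one component is the zero measure $O$.

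First I would observe that $\mu=\mu_{+}e_{+}+Oe_{-}$ (respectively $\mu=Oe_{+}+\mu_{-}e_{-}$) is still an $\mathbb{H}$-valued probability. Indeed, $\mu(X)=e_{+}$ (respectively $e_{-}$), which is one of the admissible values in the Definition, and both components are bounded non-negative measures. So the hypotheses of the Theorem are met with $\mu_{-}=O$ (respectively $\mu_{+}=O$), and the decomposition follows by replacing $\mu_{-}$ by $O$, or $\mu_{+}$ by $O$.

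Next I would address the only subtle point. The space $L^{2}(X,\sigma(X),O)$ is the space of measurable functions with $\int|h|^{2}\,dO=0<\infty$. Since every function has zero seminorm there, it collapses to $\{0\}$ once we pass to classes modulo null functions. Either way, its inner product $\langle h,k\rangle_{2}=\int hk\,dO$ vanishes identically. I would remark that the Theorem's orthogonality argument uses only $e_{+}e_{-}=0$, not the positivity of the component measures, so the decomposition remains valid.

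Finally, for $f=f_{+}e_{+}+f_{-}e_{-}$ and $g=g_{+}e_{+}+g_{-}e_{-}$, the preceding Corollary gives $[f,g]=\langle f_{+},g_{+}\rangle_{2}e_{+}+\langle f_{-},g_{-}\rangle_{2}e_{-}$. The $e_{-}$ coefficient is an integral against $O$, hence $0$, which yields the first formula. The second case is symmetric.

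I expect no real obstacle; the only point needing care is the well-definedness of $L^{2}$ for the zero measure, which is handled as described above.
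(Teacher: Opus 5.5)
Your proposal is correct given the paper's Theorem and the preceding Corollary, and it is essentially the paper's own proof. You specialize $L^{2}(X,\sigma(X),\mu)=L^{2}(X,\sigma(X),\mu_{+})e_{+}\oplus L^{2}(X,\sigma(X),\mu_{-})e_{-}$ to $\mu_{-}=O$ (resp.\ $\mu_{+}=O$), note that every integral against $O$ vanishes, and read off the surviving coefficient of $[f,g]=\langle f_{+},g_{+}\rangle_{2}e_{+}+\langle f_{-},g_{-}\rangle_{2}e_{-}$; your check that $\mu(X)=e_{\pm}$ is admissible is a harmless addition.

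One caveat is shared with the paper rather than specific to you. Since $e_{-}^{\circledast}=e_{+}$, one has $(f_{-}e_{-})^{\circledast}=f_{-}e_{+}$. Under the literal definition $[f,g]=\int_{X}fg^{\circledast}d\mu$ this gives $[f_{+}e_{+},f_{-}e_{-}]=\left(\int_{X}f_{+}f_{-}d\mu_{+}\right)e_{+}$, which is not $0$ in general. So your remark that the orthogonality ``uses only $e_{+}e_{-}=0$'', like the cited formula itself, is valid only when the pairing is read as $\int_{X}fg\,d\mu$, which is evidently what the paper intends.
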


\begin{proof}
The decomposition of $L^{2}\left(X,\sigma\left(X\right),\mu\right)$
is proven by the above theorem. So, as a special case, if either 
\[
\mu_{+}\textrm{ is a classical probability, and }\mu_{-}=O,
\]
or 
\[
\mu_{+}=O,\;\textrm{and }\mu_{-}\textrm{ is a classical probability,}
\]
then 
\[
L^{2}\left(X,\sigma\left(X\right),\mu\right)=L^{2}\left(X,\sigma\left(X\right),\mu_{+}\right)e_{+}\oplus L^{2}\left(X,\sigma\left(X\right),O\right)e_{-},
\]
respectively, 
\[
L^{2}\left(X,\sigma\left(X\right),\mu\right)=L^{2}\left(X,\sigma\left(X\right),O\right)e_{+}\oplus L^{2}\left(X,\sigma\left(X\right),\mu_{-}\right)e_{-}.
\]
Under the zero measure $O$, we trivially have that 
\[
\int_{X}g\:dO=0\in\mathbb{R},\;\;\;\forall g\in L^{2}\left(X,\sigma\left(X\right),O\right),
\]
and hence, 
\[
\left\langle g_{1},g_{2}\right\rangle _{2}=0,\;\;\;\;\;\forall g_{1},g_{2}\in L^{2}\left(X,\sigma\left(X\right),O\right).
\]
Recall that if $f=f_{+}e_{+}+f_{-}e_{-},\;g=g_{+}e_{+}+g_{-}e_{-}\in L^{2}\left(X,\sigma\left(X\right),\mu\right)$,
then 
\[
\left[f,g\right]=\left\langle f_{+},g_{+}\right\rangle _{2}e_{+}+\left\langle f_{-},g_{-}\right\rangle _{2}e_{-}\in\mathbb{H},
\]
by the above corollary. Therefore, the above $\mathbb{H}$-indefinite
inner product computations hold case-by-case.
\end{proof}
As we checked above, if we fix an arbitrary $\mathbb{H}$-valued probability
$\mu=\mu_{+}e_{+}+\mu_{-}e_{-}$ on a measurable space $\left(X,\sigma\left(X\right)\right)$,
then it means that $\mu$ satisfies the one of the following three
cases; 
\[
\mu_{+},\;\mu_{-}\textrm{ are usual probabilities satisfying }\mu\left(X\right)=e_{+}+e_{-}=1,
\]
or 
\[
\mu_{+}\textrm{ is a usual probability, and }\mu_{-}=O,\textrm{ satisfying }\mu\left(X\right)=e_{+},
\]
or 
\[
\mu_{+}=O,\textrm{ and }\mu_{-}\textrm{ is a usual probability, satisfying }\mu\left(X\right)=e_{-}.
\]

\bigskip{}

\noindent $\mathbf{Observation.}$ Suppose $\mu=\mu_{+}e_{+}+\mu_{-}e_{-}$
is a $\mathbb{H}$-valued probability satisfying
\[
\mathrm{either\;\;}\mu_{-}=O,\;\;\;\mathrm{or\;\;\;}\mu_{+}=O.
\]
Then, by the proof of the above corollary, if $f=f_{+}e_{+}+f_{-}e_{-}\in L^{2}\left(X,\sigma\left(X\right),\mu\right)$,
then the analytic information of $f$ is determined by 
\[
\mathrm{those\;of\;}f_{+},\;\;\mathrm{respectively,\;\;those\;of\;}f_{-}.
\]
Equivalently, the analytic properties of $f$ is characterized by
those of
\[
f_{+}\in L^{2}\left(X,\sigma\left(X\right),\mu_{+}\right),\;\;\mathrm{respectively,\;\;}f_{-}\in L^{2}\left(X,\sigma\left(X\right),\mu_{-}\right),
\]
under the decomposition, 
\[
L^{2}\left(X,\sigma\left(X\right),\mu\right)=L^{2}\left(X,\sigma\left(X\right),\mu_{+}\right)e_{+}\oplus L^{2}\left(X,\sigma\left(X\right),\mu_{-}\right)e_{-}.
\]
It means that the classical theories for $L^{2}\left(X,\sigma\left(X\right),\mu_{e}\right)$
dictates those for $L^{2}\left(X,\sigma\left(X\right),\mu\right)$
for $e\in\left\{ \pm\right\} $ on 
\[
\mathbb{H}=\mathbb{R}e_{+}\oplus\mathbb{R}e_{-}.
\]

\bigskip{}

\noindent $\mathbf{Assumption.}$ Let $\mu=\mu_{+}e_{+}+\mu_{-}e_{-}$
be a $\mathbb{H}$-valued probability on a measurable space $\left(X,\sigma\left(X\right)\right)$.
By the above observation, in the following text, we concentrate on
a case where both $\mu_{+}$ and $\mu_{-}$ are usual probabilities, making 
\[
\mu\left(X\right)=e_{+}+e_{-}=1=1+0k\in\mathbb{H}.
\]

\begin{definition} The $\mathbb{H}$-valued function defined on $\Omega$,
with idempotent decomposition $T=T_{1}e_{+}+T_{2}e_{-}$ is called
a random variable if both $T_{1}$ and $T_{2}$ are random variables,
i.e are measurable functions. \end{definition}

\begin{definition} The random variable $T$ has a probability density
\[
g(p)=U\begin{pmatrix}g_{1}(\lambda_{1},\mu_{1}) & 0\\
0 & g_{2}(\lambda_{2},\mu_{2})
\end{pmatrix}U
\]
if it holds that 
\begin{equation}
\begin{split}P_{\mathbb{H}}(T\le p)=\\
 & \hspace{-1cm}=U\begin{pmatrix}\iint_{(-\infty,\lambda_{1}]\times(-\infty,\lambda_{2}]}g_{1}(u,v)dudv & 0\\
0 & \iint_{(-\infty,\lambda_{1}]\times(-\infty,\lambda_{2}]}g_{2}(u,v)dudv
\end{pmatrix}U,
\end{split}
\end{equation}
where $P_{\mathbb H}$ is defined by  \eqref{6789!!!}.
\end{definition}

\begin{definition} We set
\begin{equation}
E_{\mathbb{H}}Z=U\begin{pmatrix}\iint_{(-\infty,\lambda_{1}]\times(-\infty,\lambda_{2}]}ug_{1}(u,v)dudv & 0\\
0 & \iint_{(-\infty,\lambda_{1}]\times(-\infty,\lambda_{2}]}vg_{2}(u,v)dudv
\end{pmatrix}U.
\end{equation}
\end{definition} 

\begin{proposition} Let $(\Omega,\mathcal{C})$ and $P_{1}$ and
$P_{2}$ as above, and let $T$ be a $\mathbb{H}$-valued random variable,
with idempotent decomposition 
\begin{equation}
T=U\begin{pmatrix}T_{1} & 0\\
0 & T_{2}
\end{pmatrix}U,
\end{equation}
and $p\in\mathbb{H}$ with idempotent decomposition as in \eqref{p}.
Furthermore,

\begin{equation}
E_{\mathbb H}T=U\begin{pmatrix}E_{1}T_{1} & 0\\
0 & E_{2}T_{2}
\end{pmatrix}U.
\end{equation}

Assume that $T_{1}$ and $T_{2}$ are independent with respect to both
$P_{1}$ and $P_{2}$. We have 
\begin{equation}
\begin{split}P_{\mathbb{H}}(T\le p) & =U\begin{pmatrix}P_{1}(T_{1}\le\lambda_{1})P_{1}(T_{2}\le\lambda_{2}) & 0\\
0 & P_{2}(T_{1}\le\lambda_{1})P_{2}(T_{2}\le\lambda_{2})
\end{pmatrix}U\\
 & =P_{\mathbb{H}}(T_{1}\le\lambda_{1})P_{\mathbb{H}}(T_{2}\le\lambda_{2}).
\end{split}
\label{1234}
\end{equation}
\end{proposition}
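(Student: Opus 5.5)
The plan is to work entirely in the idempotent (diagonal) picture and reduce each claim to a classical statement about $P_1$ and $P_2$ separately. The basic tool is that the map
\[
a e_{+}+b e_{-}\;\longmapsto\;U\begin{pmatrix}a&0\\0&b\end{pmatrix}U
\]
is an isomorphism of $\mathbb{R}$-algebras. This holds because $U^{2}=I$ and $e_{\pm}$ correspond to $U\,\mathrm{diag}(1,0)\,U$ and $U\,\mathrm{diag}(0,1)\,U$. So products, sums and the order \eqref{order-} of hyperbolic numbers are computed entrywise on the diagonal factors.

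First I make the event $\{T\le p\}$ precise. By \eqref{order-}, with $T=T_1e_++T_2e_-$ and $p=\lambda_1e_++\lambda_2e_-$, we have
\[
\{T\le p\}=\{T_1\le\lambda_1\}\cap\{T_2\le\lambda_2\}\in\mathcal{C}.
\]
Measurability follows from the definition of an $\mathbb{H}$-valued random variable. By \eqref{6789!!!},
\[
P_{\mathbb{H}}(T\le p)=U\begin{pmatrix}P_1(T_1\le\lambda_1,T_2\le\lambda_2)&0\\0&P_2(T_1\le\lambda_1,T_2\le\lambda_2)\end{pmatrix}U .
\]
Independence of $T_1$ and $T_2$ under $P_1$ factors the $(1,1)$ entry as $P_1(T_1\le\lambda_1)P_1(T_2\le\lambda_2)$. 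Independence under $P_2$ does the same for the $(2,2)$ entry. This gives the first line of \eqref{1234}.

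For the second line, I interpret $P_{\mathbb{H}}(T_1\le\lambda_1)$ as $P_{\mathbb{H}}$ evaluated on the event $\{T_1\le\lambda_1\}\in\mathcal{C}$, namely $U\,\mathrm{diag}\bigl(P_1(T_1\le\lambda_1),P_2(T_1\le\lambda_1)\bigr)U$, and similarly for $T_2$. Multiplying the two hyperbolic numbers diagonal by diagonal via the isomorphism above (the middle $U^2=I$ cancels) gives exactly the matrix in the first line. The expectation formula $E_{\mathbb H}T=U\,\mathrm{diag}(E_1T_1,E_2T_2)U$ is read as the definition of $E_{\mathbb H}$ through the idempotent components, consistent with the integral $\int f\,d\mu$ defined in Section 2 with $\mu_+=P_1$, $\mu_-=P_2$. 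At most it needs to be checked against that integral, which is immediate from the formula $\int f d\mu=(\int f_+d\mu_+)e_++(\int f_-d\mu_-)e_-$.

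The only real obstacle is bookkeeping: the statement mixes $\lambda_j$ (components of $p$) with events about $T_1,T_2$ measured by both $P_1$ and $P_2$. It is essential that independence is assumed under \emph{each} of $P_1,P_2$; one measure alone would factor only one diagonal entry. With that noted, every step is an entrywise classical identity.
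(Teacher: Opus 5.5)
Your proposal is correct and follows essentially the same route as the paper: write $\{T\le p\}=\{T_1\le\lambda_1\}\cap\{T_2\le\lambda_2\}$, factor each diagonal entry using independence under $P_1$ and under $P_2$ respectively, and then split the diagonal product using $U^2=I$. Your extra remark, that the expectation formula is definitional and agrees with the Section 2 integral when $\mu_+=P_1$ and $\mu_-=P_2$, covers a point the paper leaves implicit.
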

\begin{proof}
The claim follows from 
\begin{equation}
\left\{ T\le p\right\} =\left\{ T_{1}\le\lambda_{1}\right\} \cap\left\{ T_{2}\le\lambda_{2}\right\} ,
\end{equation}
and in view of the independence hypothesis, 
\begin{equation}
P_{1}(\left\{ T_{1}\le\lambda_{1}\right\} \cap\left\{ T_{2}\le\lambda_{2}\right\} )=P_{1}(\left\{ T_{1}\le\lambda_{1}\right\} )P_{1}(\left\{ T_{2}\le\lambda_{2}\right\} )
\end{equation}
and 
\begin{equation}
P_{2}(\left\{ T_{1}\le\lambda_{1}\right\} \cap\left\{ T_{2}\le\lambda_{2}\right\} )=P_{2}(\left\{ T_{1}\le\lambda_{1}\right\} )P_{2}(\left\{ T_{2}\le\lambda_{2}\right\} ).
\end{equation}

Then, from \eqref{1234} 
\begin{equation}
\begin{split}P_{\mathbb{H}}(Z\le q) & =U\begin{pmatrix}P_{1}(Z_{1}\le\lambda_{1})P_{1}(Z_{2}\le\lambda_{2}) & 0\\
0 & P_{2}(Z_{1}\le\lambda_{1})P_{2}(Z_{2}\le\lambda_{2})
\end{pmatrix}U\\
 & =U\begin{pmatrix}P_{1}(Z_{1}\le\lambda_{1}) & 0\\
0 & P_{2}(Z_{1}\le\lambda_{1})
\end{pmatrix}UU\begin{pmatrix}P_{1}(Z_{2}\le\lambda_{2}) & 0\\
0 & P_{2}(Z_{2}\le\lambda_{2})
\end{pmatrix}U\\
 & =P_{\mathbb{H}}(Z_{1}\le\lambda_{1})P_{\mathbb{H}}(Z_{2}\le\lambda_{2}).
\end{split}
\end{equation}
\end{proof}
Based on the above definitions we can now introduce $\mathbb{H}$-valued
normal Gaussian variables as follows:

\begin{definition} Let $Z_{1}$ and $Z_{2}$ be two (not necessarily independent) $N(0,1)$
random variables defined on $(\Omega,\mathcal{C})$. The $\mathbb{H}$-valued random variable 
\begin{equation}
U\begin{pmatrix}Z_{1} & 0\\
0 & Z_{2}
\end{pmatrix}U,
\end{equation}
is called the $\mathbb{H}$-valued normal Gaussian variable associated
to $Z_{1}$ and $Z_{2}$. \end{definition}

\begin{definition} A second-order $\mathbb{H}$-valued stochastic
process indexed by $\mathbb{H}$ is a map $p\mapsto X(p)$ from $\mathbb{H}$
into ${L}_{2}(\Omega,\mathcal{C},P_{\mathbb{H}})$. Its covariance
function is defined to be 
\begin{equation}
E_{\mathbb H}(X(p)X(q))=\int_{\Omega}X(p,\w)X(q,\w)dP_{\mathbb{H}}(\w).
\end{equation}
With 
\begin{equation}
X(p)=U\begin{pmatrix}X_{1}(p) & 0\\
0 & X_{2}(p)
\end{pmatrix}U,\label{repre}
\end{equation}
we have 
\begin{equation}
E_{\mathbb H}(X(p)X(q))=U\begin{pmatrix}E_{1}(X_{1}(p)X_{1}(q)) & 0\\
0 & E_{2}(X_{2}(p)X_{2}(q))
\end{pmatrix}U.\label{456654}
\end{equation}
\end{definition}

\begin{definition} A $\mathbb{H}$-valued stochastic process is Gaussian
if, in the representation \eqref{repre} both $X_{1}(p)$ and $X_{2}(p)$
are both Gaussian processes. \end{definition}

The following is the counterpart in the setting of hyperbolic probabilities
of Lo\`eve's theorem.

\begin{theorem} The covariance of a second-order $\mathbb{H}$-valued
stochastic process is positive definite. Conversely, every $\mathbb{H}$-valued
function positive definite on $\mathbb{H}$ is the covariance function
of a second-order $\mathbb{H}$-valued stochastic process, which can
be chosen to be Gaussian. \end{theorem}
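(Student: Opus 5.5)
The plan is to reduce both directions to the classical Loève theorem by means of the idempotent decomposition $\mathbb{H}=\mathbb{R}e_+\oplus\mathbb{R}e_-$. Throughout, an $\mathbb{H}$-valued kernel $K(p,q)=K_1(p,q)e_++K_2(p,q)e_-$ is called positive definite on $\mathbb{H}$ if, for every $N$, every $p_1,\ldots,p_N\in\mathbb{H}$ and every $c_1,\ldots,c_N\in\mathbb{H}$, one has $\sum_{j,\ell}c_\ell^{\circledast}K(p_\ell,p_j)c_j\ge 0$ in the order \eqref{order-}.

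The first step is to show that this holds if and only if $K_1$ and $K_2$ are real positive definite kernels on $\mathbb{H}$. Write $c_j=\alpha_je_++\beta_je_-$. Then $c_j^{\circledast}=\beta_je_++\alpha_je_-$, so the involution swaps the two coordinates; since the coefficients are arbitrary, this swap is harmless. Alternatively, the same holds with real coefficients: taking $c_j$ real already yields $\sum c_\ell c_jK_i(p_\ell,p_j)\ge0$ for $i=1,2$, and conversely the general expression reduces to two real quadratic forms in $(\alpha,\beta)$ with kernels $K_1$ and $K_2$. In either formulation the $e_\pm$-components separate, and positivity in $\mathbb{H}$ means precisely that both components are $\ge0$.

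For the direct statement, take $X(p)=X_1(p)e_++X_2(p)e_-$ with $X_i(p)\in L_2(\Omega,\mathcal{C},P_i)$. By \eqref{456654},
\[
E_{\mathbb H}(X(p)X(q))=E_1(X_1(p)X_1(q))e_++E_2(X_2(p)X_2(q))e_-.
\]
Each component is a classical covariance, hence positive definite, since $\sum c_\ell c_jE_i(X_i(p_\ell)X_i(p_j))=E_i\bigl(\sum_j c_jX_i(p_j)\bigr)^2\ge0$. The first step then gives positive definiteness of the $\mathbb{H}$-valued kernel.

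For the converse, take $K=K_1e_++K_2e_-$ positive definite. By the first step, $K_1$ and $K_2$ are real positive definite on $\mathbb{H}$ (viewed as an index set). The classical Loève/Kolmogorov theorem then provides centered Gaussian processes $(Y_1(p))_{p\in\mathbb{H}}$ on $(\Omega_1,\mathcal{C}_1,Q_1)$ and $(Y_2(p))_{p\in\mathbb{H}}$ on $(\Omega_2,\mathcal{C}_2,Q_2)$ with covariances $K_1$ and $K_2$. The difficulty is that the paper's framework requires a single measurable space $(\Omega,\mathcal{C})$ carrying two probabilities $P_1$ and $P_2$. To handle this, set $\Omega=\Omega_1\times\Omega_2$, $P_1=P_2=Q_1\otimes Q_2$, $X_1(p)(\w_1,\w_2)=Y_1(p)(\w_1)$ and $X_2(p)(\w_1,\w_2)=Y_2(p)(\w_2)$. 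Then $E_1(X_1(p)X_1(q))=K_1(p,q)$, $E_2(X_2(p)X_2(q))=K_2(p,q)$, and each $X_i$ is Gaussian, so by \eqref{456654} the process $X(p)=U\,\mathrm{diag}(X_1(p),X_2(p))\,U$ has covariance $K$ and is Gaussian in the sense of the definition above. One must also check that $X(p)\in L_2(\Omega,\mathcal{C},P_{\mathbb H})$, which by the decomposition theorem for $L^2_X$ amounts to $X_i(p)\in L^2(P_i)$; this is immediate since these variables are Gaussian.

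The main obstacle is the first step: fixing which notion of positive definiteness for $\mathbb{H}$-valued kernels is intended, and verifying that, under the involution $\circledast$ and the non-archimedean-looking order \eqref{order-}, it splits exactly into the two componentwise real conditions. Everything else is routine bookkeeping with $e_\pm$.
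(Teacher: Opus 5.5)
\textbf{Gap in your first step.} As written it fails, and the conclusion it claims is false. With $c_j=\alpha_je_++\beta_je_-$ you correctly get $c_j^{\circledast}=\beta_je_++\alpha_je_-$. The swap is not harmless, though. Products in $\mathbb{H}$ are computed componentwise in the idempotent basis, so
\[
\sum_{j,\ell}c_\ell^{\circledast}K(p_\ell,p_j)c_j=\Bigl(\sum_{j,\ell}\beta_\ell\alpha_jK_1(p_\ell,p_j)\Bigr)e_++\Bigl(\sum_{j,\ell}\alpha_\ell\beta_jK_2(p_\ell,p_j)\Bigr)e_- .
\]
Each component is therefore a bilinear form in two independent coefficient vectors, not a quadratic form. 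Requiring it to be $\ge0$ for all real $\alpha,\beta$ (replace $\beta$ by $-\beta$) forces $K_1\equiv K_2\equiv0$.

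Concretely, take $N=1$ and $c_1=k$. Then $c_1^{\circledast}K(p,p)c_1=-k^2K(p,p)=-K(p,p)$, which is not $\ge0$ as soon as $K(p,p)\neq0$. So under the definition you announced, the covariance of any nonzero process (for instance the $\mathbb{H}$-valued Brownian motion) fails to be positive definite, and the first assertion of the theorem becomes false. Your sentence that "the general expression reduces to two real quadratic forms in $(\alpha,\beta)$" is true only if the involution is dropped.

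\textbf{The fix, and how your route compares.} Use the involution-free condition $\sum_{j,\ell}c_\ell K(p_\ell,p_j)c_j\ge0$ for $c_j\in\mathbb{H}$, or restrict to real coefficients. This is natural here, since $\mathbb{H}$ is commutative and $E_{\mathbb H}(X(p)X(q))$ carries no conjugation. Either version gives exactly $\sum_{j,\ell}\alpha_\ell\alpha_jK_1(p_\ell,p_j)\ge0$ and $\sum_{j,\ell}\beta_\ell\beta_jK_2(p_\ell,p_j)\ge0$. That is componentwise positive definiteness, which is what the paper tacitly means (the form \eqref{K1-K2} of Section \ref{sec-3}).

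With that repair your argument is correct, but it differs from the paper's. The paper proves only the converse, and does so constructively. It expands $K_1$ and $K_2$ along orthonormal bases of $\mathfrak{H}(K_1)$ and $\mathfrak{H}(K_2)$, and sums against i.i.d.\ $N(0,1)$ variables on $(\mathbb{R},\frac{1}{\sqrt{2\pi}}e^{-x^2/2}dx)^A$ and on the analogous space indexed by $B$. This also gives $X(p)$ as a series in the $\mathbb{H}$-valued Gaussians $Z_{a,b}$.

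You instead use Kolmogorov's existence theorem as a black box, which is shorter but gives no explicit representation. On the other hand, you state the direct part, and you give the choice $P_1=P_2=Q_1\otimes Q_2$ on $\Omega_1\times\Omega_2$, which the paper leaves unspecified.
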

\begin{proof}
In the notation of Section \ref{sec-3} we write 
\begin{eqnarray}
K_{1}(p,q) & = & \sum_{a\in A}f_{a}^{(1)}(p)f_{a}^{(1)}(q)\\
K_{2}(p,q) & = & \sum_{b\in B}f_{b}^{(2)}(p)f_{b}^{(2)}(q)
\end{eqnarray}
where now the functions depend on $p$ and not only on $\lambda_{1}$
or $\lambda_{2}$. As in the introduction, consider now 
\[
\Omega_{1}=(\mathbb{R},\frac{1}{\sqrt{2\pi}}e^{-\frac{-x^{2}}{2}}dx)^{A}\quad{\rm and}\quad\Omega_{2}=(\mathbb{R},\frac{1}{\sqrt{2\pi}}e^{-\frac{-x^{2}}{2}}dx)^{B}.
\]
$\Omega_{1}$ contains a family $(Z_{a}^{(1)})_{a\in A}$ of independent
$N(0,1)$ random variables and similarly for $\Omega_{2}$. We set
\[
X_{j}(p)=\sum_{a\in A}f_{a}^{(j)}(p)Z_{a}^{(j)}(\w),\quad j=1,2,
\]
and 
\[
X(p)=U\begin{pmatrix}X_{1}(p) & 0\\
0 & X_{2}(p)
\end{pmatrix}U.
\]
We consider the $\mathbb{H}$-valued probability measure $P=U\begin{pmatrix}P_{1} & 0\\
0 & P_{2}
\end{pmatrix}U$ on the probability space $(\Omega_{1}\times\Omega_{2})$ endowed
with the product sigma-algebra . Then 
\[
E_{\mathbb H}X(p)X(q)=K(p,q).
\]
Furthermore, one can write 
\[
X(p)=\sum_{\substack{a\in A\\
b\in B
}
}U\begin{pmatrix}f_{a}^{(1)}(p) & 0\\
0 & f_{b}^{(2)}(p)
\end{pmatrix}UU\begin{pmatrix}Z_{a}^{(1)}(\w) & 0\\
0 & Z_{b}^{(2)}(\w)
\end{pmatrix}U
\]
where the $\mathbb{H}$-valued random variables 
\[
Z_{a,b}(\w)=U\begin{pmatrix}Z_{a}^{(1)}(\w) & 0\\
0 & Z_{b}^{(2)}(\w)
\end{pmatrix}U,\quad a\in A,\,\ b\in B
\]
are independent Gaussian variables. 
\end{proof}
In the sequel, we focus on processes $X$ for which, in the decomposition
\eqref{repre} we have that $X_{1}$ is a function of $\lambda_{1}$
and $X_{2}$ is a function of $\lambda_{2}$. Then \eqref{456654}
is of the form \eqref{K1-K2}.

\section{The hyperbolic white noise space}
\setcounter{equation}{0}
\label{sec-5}
In order to construct the derivative of the Brownian motion, we use Hida's white noise space approach to construct
a specific probability space $(\Omega,\mathcal{A},P)$, where the
$\mathbb{H}$-valued Brownian motion will be defined, and an associated
Gelfand triple, to build the associated $\mathbb{H}$-valued white
noise. The main aspects of Hida's theory can be found in \cite{MR562914,Hi90,MR1387829},
and can be seen as a stochastic version of the Gelfand triple 
\[
(\mathcal{S}_{\mathbb{R}},{L}^{2}(\mathbb{R},dx,\mathbb{R}),\mathcal{S}_{\mathbb{R}}^{\prime}).
\]
In this section we content ourselves to describing the main aspects
of the theory needed in this paper.\smallskip{}

The construction of the probability space involves a deep theorem
of functional analysis, namely the Bochner-Minlos theorem. Denoting
by $\mathcal{S}_{\mathbb{R}}$ the nuclear Fr\'echet space of Schwartz
test functions, and by $\mathcal{S}_{\mathbb{R}}^{\prime}$ its dual
(the space of real-valued tempered distributions), the Bochner-Minlos
theorem asserts that there exists a probability measure on the cylinder
sigma-algebra of $\mathcal{S}_{\mathbb{R}}^{\prime}$ such that 
\begin{equation}
e^{-\frac{\|s\|_{2}^{2}}{2}}=\int_{\mathcal{S}_{\mathbb{R}}^{\prime}}e^{i\langle s,\w\rangle}dP(\w),\quad s\in\mathcal{S}_{\mathbb{R}},\label{Piso-123456}
\end{equation}
and where $\|s\|_{2}$ denotes the norm in the Lebesgue space. It
follows in particular from this equation that the random variable
\begin{equation}
Q_{s}(\w)=\langle s,\w\rangle
\end{equation}
is a centered Gaussian with covariance $\|s\|_{2}^{2}$. In the notation
of the first paragraph of the introduction (recall that we denote
by $\zeta_{1},\zeta_{2},\ldots$ denote the normalized Hermite functions),
we define 
\begin{equation}
Z_{n}=Q_{\zeta_{n}},\quad n=1,2,\ldots,\label{znn}
\end{equation}
The map $s\mapsto Q_{s}$ extends to an isometry from ${L}^{2}(\mathbb{R},dx)$
into ${L}^{2}(\mathcal{S}_{\mathbb{R}}^{\prime},\mathcal{C},P)$,
which we will denote by the same symbol.\smallskip{}

Let $\ell$ denote the set of infinite sequences $(\alpha_{1},\alpha_{2},\ldots)$
indexed by $\mathbb{N}$, and with values in $\mathbb{N}_{0}$, and
for which $\alpha_{j}\not=0$ for at most a finite number of indices
and let $h_{0},h_{1},\ldots$ denote the probabilists Hermite polynomials.
Then, (\cite[Theorem 2.2.3 p. 24]{new_sde}) the functions 
\begin{equation}
  \label{5-4}
H_{\alpha}(\w)=\prod_{n=1}^{\infty}h_{\alpha_{n}}(Q_{\zeta_{n}})
\end{equation}
form an orthogonal basis of ${L}^{2}(\mathcal{S}_{\mathbb{R}}^{\prime},\mathcal{C},P)$,
and 
\begin{equation}
  \label{5-5}
\langle H_{\alpha},H_{\beta}\rangle_{P}=\delta_{\alpha,\beta}\alpha!.
\end{equation}

In linear system theory convolution of the coefficients indices (also
called Cauchy product; see \cite{MR51:583}) allows multiplication
when point-wise multiplication is not possible or not the right tool;
see e.g. and \cite{am_ieot} and \cite{MR4762055} for two examples.
The present setting is no exception and the point-wise product is replaced
by the convolution on $\ell$, here called Wick product, defined by
\begin{equation}
  H_{\alpha}\star H_{\beta}=H_{\alpha+\beta}
\end{equation}
The Wick product of two elements of ${L}^{2}(\mathcal{S}_{\mathbb{R}}^{\prime},\mathcal{C},P)$
need not belong to ${L}^{2}(\mathcal{S}_{\mathbb{R}}^{\prime},\mathcal{C},P)$,
and we embed the latter in a topological algebra of a special type
(called strong algebra in \cite{MR3404695}) and in which the Wick
product is stable. The first such algebra originates with the work
of Vage \cite{vage96b,vage96} and was defined by Kondratiev; see
\cite{new_sde}. The general theory of such algebras was later developed
by the first named author with G. Salomon; see \cite{MR3029153,MR3404695}.\smallskip{}

\begin{remark}
  \label{two-BM}
The same analysis with the function
\begin{equation}
  \label{s-sigma}
e^{-\frac{\int_{\mathbb R}|\widehat{s}(u)|^2d\sigma(u)}{2}}
\end{equation}
where $d\sigma$ is a positive measure satisfying
\begin{equation}
\int_{\mathbb R}\frac{d\sigma(u)}{u^2+1}<\infty
\label{m-u-2-3}
\end{equation}
and where
\[
  \widehat{s}(u)=\int_{\mathbb R}e^{-iut}s(t)dt
  \]
  denotes the Fourier transform, will lead to another probability $P_\sigma$ on the cylinder algebra, and another family of independent
  $N(0,1)$ variables, say $W_0,W_1,\ldots$.\smallskip

  In more details, first remark that, in view of \eqref{m-u-2-3}, for any Schwartz function the quantity
  \[
\|s\|_{\sigma}=\left(    \int_{\mathbb R}|\widehat{s}(u)|^2d\sigma(u)\right)^{\frac{1}{2}}
\]
is finite. For this to hold it would have been enough to consider the condition
\begin{equation}
\int_{\mathbb R}\frac{d\sigma(u)}{(u^2+1)^N}<\infty
\label{m-u-2-3-5}
\end{equation}
for some $N\in\mathbb N$. For $N>1$ the convergence of the integrals in  \eqref{9-6}-\eqref{9-9} is not guaranteed anymore. See \cite{ajnfao,MR3402823}
for further information in the classical case when \eqref{m-u-2-3-5} is in force.\smallskip

We denote by $\stackrel{\circ}{\mathcal H    (\sigma)}$ the space $\mathcal S_{\mathbb R}$ endowed with the pre-Hilbert
space structure defined by \eqref{m-u-2-3}.
The set $\stackrel{\circ}{\mathcal H(\sigma)}$  need not be closed (for instance, with $d\sigma(u)=\frac{du}{u^2+1}$,
the function $\widehat{s}\equiv 1$ will correspond to the Dirac distribution).\smallskip

The  map \eqref{s-sigma} is continuous in the Fr\'echet topology of $\mathcal S$, see \cite[Theorem 5.2, (5.4)]{MR2793121}.
  We can thus apply the Bochner-Minlos theorem to \eqref{s-sigma} and write
  \begin{equation}
 e^{-\frac{\int_{\mathbb R}|\widehat{s}(u)|^2d\sigma(u)}{2}}=\int_{\mathcal S^\prime_{\mathbb R}} e^{i\langle w,s\rangle}dP_\sigma(w),
    \end{equation}
  for a uniquely defined cylinder measure $P_\sigma$. The map $Q_s$ is now a $N(0,1)$ centered Gaussian  with variance
  $\int_{\mathbb R}|\widehat{s}(u)|^2d\sigma(u)$ and the application $s\mapsto Q_s$ is an isometry from
  $\stackrel{\circ}{\mathcal H(\sigma)}$ into $L^2(\mathcal S^\prime_{\mathbb R},\mathcal C,P_\sigma)$:
\[
\langle s_1,s_2\rangle_\sigma=\int_{\mathbb R}\overline{\widehat{s_1}(u)}\widehat{s_2}(u)d\sigma(u), \quad s_1,s_2\in\mathcal S_{\mathbb R}.
\]
  
  The map $s\mapsto Q_s$ extends to an everywhere defined isometry from the closure  $\mathcal H(\sigma)$ of $\stackrel{\circ}{\mathcal H(\sigma)}$ into
  $L^2(\mathcal S^\prime_{\mathbb R},\mathcal C,P_\sigma)$. 
To proceed one chooses an orthonormal system $g_0,g_1,\ldots$ of $\mathcal H(\sigma)$. The $g_j$ need not belong to $\mathcal S_{\mathbb R}$.
(In the classical case one can take the $g_j$ to be the normalized Hermite functions).
Then by \cite[Theorem 3.21 p. 29]{MR99f:60082}, the corresponding functions \eqref{5-4} with the $\zeta_n$ replaced by the $g_n$ form an
orthogonal basis which also satisfies \eqref{5-5}). After normalization by $\sqrt{\alpha!}$ we get an orthonormal basis of the corresponding probability
space $L^2(\mathcal S^\prime_{\mathbb R},\mathcal C,P_\sigma)$.
\end{remark}

\begin{corollary} Let $\sigma$ and $\mu$ be two positive measures on the real line, satisfying \eqref{m-u-2-3}, and for which the corresponding Hilbert
  spaces $\mathcal H(\sigma)$ and $\mathcal H(\mu)$ are countable. Let respectively $g_0,g_1,\ldots$ and $h_0,h_1,\ldots$ be orthonormal basis of
  these spaces. We denote by $Z_n=Q_{g_n}$ and $W_n=Q_{h_n}$ in
  $L^2(\mathcal S^\prime_{\mathbb R},\mathcal C,P_\sigma)$ and $L^2(\mathcal S^\prime_{\mathbb R},\mathcal C,P_\mu)$.
 \label{cor-78}
\end{corollary}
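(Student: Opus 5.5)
The statement, as written, fixes the notation $Z_n=Q_{g_n}$ and $W_n=Q_{h_n}$. I read its implicit content as the claim from the abstract: $(Z_n)_{n\in\mathbb N_0}$ is a family of independent $N(0,1)$ variables with respect to $P_\sigma$, and $(W_n)_{n\in\mathbb N_0}$ is such a family with respect to $P_\mu$. The two statements are symmetric, so I will prove the one for $\sigma$ and get the other by replacing $\sigma$ with $\mu$. The route is through characteristic functionals: show that every finite vector $(Z_{n_1},\ldots,Z_{n_k})$ is jointly centered Gaussian with identity covariance, and then use that uncorrelated jointly Gaussian variables are independent.

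\emph{Step 1 (test functions).} Take $s_1,\ldots,s_k\in\mathcal S_{\mathbb R}$ and $t\in\mathbb R^k$. Since $s\mapsto Q_s$ is linear on $\mathcal S_{\mathbb R}$, we have $\sum t_jQ_{s_j}=Q_{\sum t_js_j}$. The Bochner--Minlos identity of Remark \ref{two-BM} then gives
\[
E_{P_\sigma}\Big(e^{i\sum_j t_jQ_{s_j}}\Big)=e^{-\frac12\|\sum_j t_js_j\|_\sigma^2}=e^{-\frac12\sum_{j,l}t_jt_l\langle s_j,s_l\rangle_\sigma}.
\]
This holds because the $s_j$ are real-valued, so $\langle s_j,s_l\rangle_\sigma$ is real and symmetric. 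Hence $(Q_{s_1},\ldots,Q_{s_k})$ is a centered Gaussian vector with Gram matrix $(\langle s_j,s_l\rangle_\sigma)$.

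\emph{Step 2 (passing to the closure).} The $g_n$ lie in the closure $\mathcal H(\sigma)$ and need not be Schwartz functions; this is the only genuinely delicate point. Choose $s_j^{(m)}\in\mathcal S_{\mathbb R}$ with $s_j^{(m)}\to g_{n_j}$ in $\mathcal H(\sigma)$. By the isometry, $Q_{s_j^{(m)}}\to Q_{g_{n_j}}$ in $L^2(P_\sigma)$, hence in probability. Therefore the characteristic functions converge pointwise, by dominated convergence since $|e^{ix}|=1$. Also $\langle s_j^{(m)},s_l^{(m)}\rangle_\sigma\to\langle g_{n_j},g_{n_l}\rangle_\sigma=\delta_{jl}$, so the right-hand side converges as well. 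The limit identity reads
\[
E_{P_\sigma}\Big(e^{i\sum_j t_jZ_{n_j}}\Big)=e^{-\frac12|t|^2}.
\]
One must make sure that the closure and the orthonormality are taken for the real inner product $\langle\cdot,\cdot\rangle_\sigma$. This is where the evenness of $\sigma$ (the real-valued case) is used.

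\emph{Step 3 (conclusion).} The function $e^{-|t|^2/2}$ factors as $\prod_j e^{-t_j^2/2}$. So each $Z_{n_j}$ is $N(0,1)$ and the finite family is independent under $P_\sigma$. Since independence of a countable family means independence of all its finite subfamilies, $(Z_n)$ is i.i.d. $N(0,1)$ under $P_\sigma$, and in the same way $(W_n)$ is i.i.d. $N(0,1)$ under $P_\mu$.

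I would close with a remark: nothing above relates $Z_n$ to $W_m$. Under $P_\sigma$, the variable $W_m=Q_{h_m}$ has variance $\|h_m\|_\sigma^2$ (when $h_m\in\mathcal H(\sigma)$), and $E_{P_\sigma}(Z_nW_m)=\langle g_n,h_m\rangle_\sigma$, which in general is nonzero. So mutual independence of the two families fails in general, which is the point stressed in the abstract.
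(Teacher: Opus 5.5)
Your argument is correct and follows the paper's own justification in Remark \ref{two-BM}: Bochner--Minlos gives the Gaussian law of $Q_s$ for $s\in\mathcal S_{\mathbb R}$, the isometry $s\mapsto Q_s$ extends to $\mathcal H(\sigma)$, and orthonormality of the $g_n$ then gives i.i.d.\ $N(0,1)$ variables. Your joint characteristic-function and approximation steps make explicit the passage from ``orthonormal'' to ``independent'' that the paper leaves implicit. One small correction: evenness of $\sigma$ is neither assumed in the statement nor needed, because for real $t$ the quadratic form $\|\sum_j t_js_j\|_\sigma^2$ involves only $\mathrm{Re}\,\langle s_j,s_l\rangle_\sigma$, which is the real inner product obtained by polarizing $\|\cdot\|_\sigma$, so the covariance matrix is $\bigl(\mathrm{Re}\,\langle s_j,s_l\rangle_\sigma\bigr)$ and the rest of your proof goes through unchanged.
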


 We get on $\mathcal S^\prime_{\mathbb R}$ two probability space structures, in which $Z_0,Z_1,\ldots$ and $W_0,W_1,\ldots$ are respectively
  orthonormal families. The Gaussian variables $Z_n$ and $W_m$ need not be independent with respect to either probability.

\begin{definition}
  Let $\sigma$ and $\mu$ be two positive measures on the real line satisfying \eqref{m-u-2}.
The space $L^2(\mathcal S_{\mathbb R}^\prime,\mathcal C,P_{\mathbb H})$ with $P_1=P_{\sigma}$ and $P_2=P_{\mu}$ is called hyperbolic white noise space.
  \end{definition}
\section{The $\mathbb{H}$-valued Brownian motion}

\setcounter{equation}{0}

\begin{proposition} Let $p\wedge q$ and $|p-q|_{\mathbb H}$ be defined by \eqref{min-sup} and \eqref{p-q} respectively. The function 
\[
p\wedge q,\quad p,q\in\mathbb{H}_{+}
\]
is positive definite on $\mathbb{H}_{+}$ and the function
\begin{equation}
\frac{1}{2}\left(|p|_{\mathbb{H}}+|q|_{\mathbb{H}}-|p-q|_{\mathbb{H}}\right),\quad p,q\in\mathbb{H}\label{wertyuio}
\end{equation}
is positive definite on $\mathbb{H}$. \end{proposition}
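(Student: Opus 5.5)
Both kernels are diagonal in the idempotent decomposition, so the plan is to reduce everything to the classical scalar statements through the structure of Section \ref{sec-3}. Write $p=\lambda_1e_++\lambda_2e_-$ and $q=\mu_1e_++\mu_2e_-$. By \eqref{min-sup} and \eqref{p-q},
\[
p\wedge q=U\begin{pmatrix}\lambda_1\wedge\mu_1&0\\0&\lambda_2\wedge\mu_2\end{pmatrix}U
\]
and
\[
\tfrac12\left(|p|_{\mathbb H}+|q|_{\mathbb H}-|p-q|_{\mathbb H}\right)=U\begin{pmatrix}K_0(\lambda_1,\mu_1)&0\\0&K_0(\lambda_2,\mu_2)\end{pmatrix}U,
\]
where $K_0(t,s)=\frac{|t|+|s|-|t-s|}{2}$. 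This uses that $f\mapsto f_{\mathbb H}$ is computed componentwise and that $|p|_{\mathbb H}$ is $f_{\mathbb H}(p)$ with $f(x)=|x|$. Both kernels are therefore of the form \eqref{K1-K2}, with $K_1=K_2=\min$ on $S=[0,\infty)$ (here $\mathbb H_+=\{p\ge 0\}$, which is exactly $\lambda_1,\lambda_2\ge0$ by \eqref{order-}) and $K_1=K_2=K_0$ on $S=\mathbb R$.

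Next I would spell out what positive definiteness means for an $\mathbb H$-valued kernel: for every $N$, all $p_1,\dots,p_N$ and all $c_1,\dots,c_N\in\mathbb H$ (or real $c_j$), the sum $\sum_{j,\ell}c_j^{\circledast}K(p_j,p_\ell)c_\ell$ should be $\ge0$ in the order \eqref{2-10}. Equivalently, the block matrix $(K(p_j,p_\ell))$, viewed as an element of $\mathbb H^{N\times N}$, should be positive semidefinite. Write $c_j=\gamma_je_++\delta_je_-$ and use $e_\pm^2=e_\pm$, $e_+e_-=0$, and $e_+^{\circledast}=e_-$. The quadratic form then splits as
\[
\Big(\sum_{j,\ell}\gamma'_j\gamma'_\ell K_1(\lambda^{(j)}_1,\lambda^{(\ell)}_1)\Big)e_\pm+\Big(\sum_{j,\ell}\delta'_j\delta'_\ell K_2(\lambda^{(j)}_2,\lambda^{(\ell)}_2)\Big)e_\mp,
\]
with real coefficients. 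Some care is needed with the involution: it swaps the components, so $\gamma'$ and $\delta'$ are the appropriate components. This is the one point I would check carefully. If it turns out awkward, I would take the definition componentwise via the matrix picture, where $U(\cdot)U$ conjugation preserves positivity, so positivity of $\begin{pmatrix}A&0\\0&B\end{pmatrix}$ is equivalent to $A\ge0$ and $B\ge0$. In either formulation, positivity in $\mathbb H$ means both real components are $\ge0$. So the problem reduces to showing that $\min(t,s)$ is positive definite on $[0,\infty)$ and $K_0$ is positive definite on $\mathbb R$.

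The classical facts then finish the argument. For $t,s\ge0$ we have $\min(t,s)=\int_0^\infty 1_{[0,t]}(x)1_{[0,s]}(x)\,dx$, a Gram kernel in $L^2(\mathbb R_+)$, so
\[
\sum_{j,\ell}a_ja_\ell\min(t_j,t_\ell)=\Big\|\sum_j a_j1_{[0,t_j]}\Big\|_2^2\ge0.
\]
For $K_0$, I would either invoke \eqref{classical-bm}, since $K_0$ is the covariance of the Brownian motion $B_t$ constructed in the introduction and covariances are positive definite, or write $K_0(t,s)=\int_{\mathbb R}\iota_t\iota_s\,dx$ with $\iota_t=\operatorname{sgn}(t)1_{[0\wedge t,\,0\vee t]}$. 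The latter is checked by cases on the signs of $t$ and $s$: when they have opposite signs both sides are $0$, and when they have the same sign both sides equal $\min(|t|,|s|)$.

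I expect the main obstacle to be purely bookkeeping. The only real issue is fixing the precise notion of positive definiteness over $\mathbb H$ and verifying that it decouples along the idempotents; the analytic content is classical.
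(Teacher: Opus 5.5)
Your argument is the paper's: both kernels are diagonal in the idempotent decomposition, hence of the form \eqref{K1-K2} with $K_1=K_2$ the classical kernel, and positivity reduces to the classical facts for $\min$ on $[0,\infty)$ and for $\frac12(|t|+|s|-|t-s|)$ on $\mathbb R$. You prove these via the Gram representations, whereas the paper simply cites them.

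One correction at the point you flagged. The form $\sum_{j,\ell}c_j^{\circledast}K(p_j,p_\ell)c_\ell$ with $c_j\in\mathbb H$ does not split into two quadratic forms. Because $c_j^{\circledast}=\gamma_je_-+\delta_je_+$, its $e_+$-component is the mixed sum $\sum_{j,\ell}\delta_j\gamma_\ell K_1(\lambda_1^{(j)},\lambda_1^{(\ell)})$. For $N=1$ and $c_1=k$ it equals $-K(p,p)$. So under that definition no nonzero kernel, in particular not \eqref{wertyuio}, would be positive definite, and the definition is not equivalent to positivity of the block matrix.

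Drop it and use your fallback instead. Any of the following works: real coefficients; no involution at all, since hyperbolic numbers are symmetric $2\times 2$ matrices; or positive semidefiniteness of $(K(p_j,p_\ell))\in\mathbb H^{N\times N}$. By the criterion $-A\le B\le A$, where $A\pm B$ are the Gram matrices of $K_1$ and $K_2$ (equivalently, by congruence with $I_N\otimes U$), the last condition says exactly that both Gram matrices are positive semidefinite. With that definition your proof is complete.
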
 
\begin{proof}
This follows from \eqref{K1-K2} and the corresponding fact for the
functions $\lambda_{1}\wedge\mu_{1}$ and $|\lambda_{1}|+|\mu_{1}|-|\lambda_{1}-\mu_{1}|$. 
\end{proof}
\begin{definition} The stochastic process on the probability space
$(\Omega,\mathcal{C},P)$ where $P$ is $\mathbb{H}$-valued is called
a $\mathbb{H}$-valued Brownian motion if it is Gaussian and if 
\begin{equation}
E_{\mathbb H}(B_{p}B_{q})=p\wedge q,\quad p,q\in\mathbb{H}_{+}
\end{equation}
and more generally, using \eqref{p-q} 
\begin{equation}
E_{\mathbb H}(B_{p}B_{q})=\frac{1}{2}\left(|p|_{\mathbb{H}}+|q|_{\mathbb{H}}-|p-q|_{\mathbb{H}}\right),\quad p,q\in\mathbb{H}.
\end{equation}
\end{definition}

The existence and construction of $B_{p}$ follow from the previous
section. More precisely:

\begin{theorem} Let, as in Corollary \ref{cor-78}, be given two probability measures on the cylinder sigma-algebra of $\mathcal S^\prime_{\mathbb R}$, and let $Z_1,Z_2,\ldots$ and
  $W_1,W_2,\ldots$ be the corresponding sequences of pairwise orthogonal Gaussian variables. Define (with $p=\lambda_1e_++\lambda_2e_-$)
\begin{equation}
B_{p}(\w)=\sum_{n=0}^{\infty}U\begin{pmatrix}\left(\int_{0}^{\lambda_{1}}\zeta_{n}(u)du\right)Z_{n}(\w) & 0\\
0 & \left(\int_{0}^{\lambda_{2}}\zeta_{n}(u)du\right)W_{n}(\w)
\end{pmatrix}U.
\end{equation}
Then, $(B_{p})$ 
is $\mathbb{H}$-valued Gaussian and has covariance
function \eqref{wertyuio}. \end{theorem}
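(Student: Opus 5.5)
The plan is to reduce the statement to two classical computations, one in each idempotent component. Writing $B_p$ in idempotent form gives
\[
B_p=X_1(\lambda_1)e_++X_2(\lambda_2)e_-,\qquad X_1(\lambda_1)=\sum_{n=0}^\infty\Big(\int_0^{\lambda_1}\zeta_n(u)du\Big)Z_n,\qquad X_2(\lambda_2)=\sum_{n=0}^\infty\Big(\int_0^{\lambda_2}\zeta_n(u)du\Big)W_n .
\]
By \eqref{456654}, the covariance $E_{\mathbb H}(B_pB_q)$ is the hyperbolic number with components $E_1(X_1(\lambda_1)X_1(\mu_1))$ and $E_2(X_2(\lambda_2)X_2(\mu_2))$, where $E_1$ is taken with respect to $P_1=P_\sigma$ and $E_2$ with respect to $P_2=P_\mu$. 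By \eqref{p-q} and the definition of the order, the target \eqref{wertyuio} has components $\tfrac12(|\lambda_j|+|\mu_j|-|\lambda_j-\mu_j|)$ for $j=1,2$. It therefore suffices to check each component separately. Here it matters only that $Z_n$ is orthonormal for $P_1$ and $W_n$ for $P_2$; no joint property of the $Z$'s and $W$'s is needed, which is why their possible mutual dependence causes no difficulty.

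For the first component, I use that $(Z_n)$ is an orthonormal family of centered Gaussians in $L^2(P_1)$ and that $\sum_n(\int_0^{\lambda}\zeta_n)^2=\|1_{[0,\lambda]}\|^2_{L^2}=|\lambda|<\infty$, because the $\zeta_n$ form an orthonormal basis of $L^2(\mathbb R)$. Hence the series converges in $L^2(P_1)$ and defines a centered Gaussian variable, as an $L^2$-limit of centered Gaussians. Parseval then gives
\[
E_1(X_1(\lambda_1)X_1(\mu_1))=\sum_n\langle 1_{[0,\lambda_1]},\zeta_n\rangle\langle 1_{[0,\mu_1]},\zeta_n\rangle=\langle 1_{[0,\lambda_1]},1_{[0,\mu_1]}\rangle,
\]
with signed intervals for negative arguments. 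This inner product equals $\tfrac12(|\lambda_1|+|\mu_1|-|\lambda_1-\mu_1|)$, as in \eqref{classical-bm}. The second component is handled the same way with $W_n$ and $P_2$.

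Gaussianity of the process in the sense of the definition means that $X_1$ and $X_2$ are each Gaussian processes. For any finite set of indices, linear combinations of the $X_1(\lambda)$ are $L^2(P_1)$-limits of linear combinations of the $Z_n$. Those combinations are Gaussian because finite families of the $Z_n$ are jointly Gaussian: for $g$ in the span, $Q_g$ is Gaussian by the Bochner–Minlos characteristic function. $L^2$-limits of centered Gaussians are again Gaussian, and the same holds for $X_2$ under $P_2$.

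The main obstacle is a mismatch in the statement. The $Z_n=Q_{g_n}$ come from an orthonormal basis $g_n$ of $\mathcal H(\sigma)$, while the coefficients use the Hermite functions $\zeta_n$. I would argue that all that matters is orthonormality of $(Z_n)$ in $L^2(P_1)$, which holds because $Q$ is an isometry on $\mathcal H(\sigma)$, together with joint Gaussianity, and both are supplied by Corollary \ref{cor-78} and the Bochner–Minlos construction. Also, the indexing in the theorem starts at $n=1$ for the $Z_n$ while the sum starts at $n=0$; I would take the families indexed from $0$ so that all $\zeta_n$ are used and Parseval applies.
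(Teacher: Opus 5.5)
Your proposal is correct and follows the same route as the paper. The paper's proof is the one line ``follows from \eqref{repre} and \eqref{456654} using the construction in Corollary \ref{cor-78}'', i.e.\ it reduces to the two idempotent components and uses the classical Brownian motion computation in each. Your Parseval computation, the $L^2$-limit argument for Gaussianity, and your remarks on the indexing and on the $\zeta_n$ versus $g_n$ mismatch make explicit what that line leaves implicit.
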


\begin{proof}
The result follows from \eqref{repre} and \eqref{456654} using the construction in Corollary \ref{cor-78}.
  \end{proof}

The above arguments can be extended to define the fractional $\mathbb{H}$-valued
Brownian motion, which will be the Gaussian stochastic process with
covariance function 
\begin{equation}
K_{H}(p,q)=V_{H}\left\{ |p|_{\mathbb{H}}^{2H}+|q|_{\mathbb{H}}^{2H}-|p-q|_{\mathbb{H}}^{2H}\right\} \label{K-H-t-s}
\end{equation}
where $V_H$ is given by \eqref{V-H}. This is done in Section \ref{sec-fbm}.
\section{Stochastic Gelfand triples}
\setcounter{equation}{0}
\label{sec-sto}
We denote by $\ell$ the set of sequence of integers
\[
  \alpha=(\alpha_0,\alpha_1,\ldots)
\]
for which at most a finite number of entries are different from $0$.\smallskip

The Kondratiev space of stochastic test functions is equal to 
\[
\mathcal{K}=\quad\bigcap_{k\in\mathbb{N}}{\mathcal{G}}_{k}
\]
where for $k\in\mathbb{N}_{0}$, $\mathcal{G}_{k}$ is the Hilbert
space of series of the form 
\[
f(\omega)=\sum_{\alpha\in\ell}{c_{\alpha}H_{\alpha}(\omega)},\quad{with\,\,norm}\quad n_{k}(f)=\left(\sum_{\alpha\in\ell}(\alpha!)^{2}|c_{\alpha}|^{2}(2\mathbb{N})^{k\alpha}\right)^{1/2}<\infty,
\]
with $(2{\mathbb{N}})^{\alpha}=2^{\alpha_{1}}(2\times2)^{\alpha_{2}}(2\times3)^{\alpha_{3}}\cdots$.\smallskip

We denote by ${\mathcal{H}}_{k}$ the Hilbert space of formal series
of the form 
\[
  f(\omega)=\sum_{\alpha\in\ell}{c_{\alpha}H_{\alpha}(\omega)},\quad{with\,\,norm}\quad||f||_{-k}=\left(\sum_{\alpha\in\ell}{|c_{\alpha}|^{2}
      (2\mathbb{N})^{-k\alpha}}\right)^{1/2}<\infty.
\]
For example,  with 
\[
  e_n=(0,0,\ldots, \underbrace{\fbox{1}}_{n-th\,\, place},0,\ldots),\quad n=0,1,2,\ldots,
  \]
the random variables
\begin{equation}
  \label{zn}
  Z_n=H_{e_n}
\end{equation}
are independent $N(0,1)$ variables, and in particular
\begin{equation}
  \label{Hen}
\|H_{e_n}\|_{\mathcal H_{-k}}^2=(2n)^{-2}=\frac{1}{2^k(n+1)^k}
  \end{equation}

The Kondratiev space of stochastic distributions is the inductive
limit 
\[
\mathcal{K}^{\prime}=\quad\bigcup_{k\in{\mathbb{N}}}{{\mathcal{H}}_{k}}.
\]

$\mathcal{K}^{\prime}$ is the dual of $\mathcal{K}$ and allows to
define the Gelfand triple $(\mathcal{K},{L}^{2}(\mathcal{S}_{\mathbb{R}}^{\prime},\mathcal{C},P),\mathcal{K}^{\prime})$.\smallskip

The Kondratiev space is a topological algebra, which possesses a sequence of inequalities on the norms, introduced by Vage \cite{vage96}; see Theorem
\ref{dallas}. Other spaces with similar inequalities, called strong algebras, were introduced in \cite{MR3038506,MR3029153,  MR3404695}.\smallskip{}

The space $\mathcal{K}^{\prime}$ is not metrizable, but the injection
from $\mathcal{H}_{k}$ into $\mathcal{H}_{k+1}$ has a finite trace
for every $k\in\mathbb{N}$ (this implies in particular that the Fréchet
space $\mathcal{K}=\mathcal{K}^{\prime\prime}$ is nuclear, and hence
$\mathcal{K}^{\prime}$ is also nuclear, and in particular perfect
in the terminology of Gelfand and Shilov. One has then the following
property when working with sequences (see \cite[p. 57]{GS2_english}
for more details):

\begin{proposition} \label{prop} A sequence of elements $(x_{n})_{n\in\mathbb{N}}$
in $\mathcal{K}^{\prime}$ converges to $x\in\mathcal{K}^{\prime}$ in the strong topology if and only if there exists $p$ such that $x_{n}\in\mathcal{H}_{p}$
from a certain rank, $x\in\mathcal{H}_{p}$, and 
\[
\lim_{n\rightarrow\infty}\|x_{n}-x\|_{p}=0.
\]
\end{proposition}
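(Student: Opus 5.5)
The plan is to deduce the statement from the structure of $\mathcal{K}^{\prime}$ as a countable increasing union of Hilbert spaces, $\mathcal{H}_{1}\subset\mathcal{H}_{2}\subset\cdots$, in which each injection $\mathcal{H}_{k}\hookrightarrow\mathcal{H}_{k+1}$ is of finite trace. I will treat the two implications separately.

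\emph{Sufficiency.} Assume $x_{n}\in\mathcal{H}_{p}$ for $n\ge n_{0}$, $x\in\mathcal{H}_{p}$, and $\|x_{n}-x\|_{p}\to0$. The canonical inclusion $\mathcal{H}_{p}\hookrightarrow\mathcal{K}^{\prime}$ is continuous for the inductive limit topology, and the strong dual topology on $\mathcal{K}^{\prime}$ coincides with this inductive limit topology. Continuity then gives $x_{n}\to x$ strongly. For a direct check, take a bounded set $B\subset\mathcal{K}$. Then $\sup_{f\in B}n_{p}(f)=:C<\infty$, and by the duality pairing $|\langle x_{n}-x,f\rangle|\le\|x_{n}-x\|_{p}\,n_{p}(f)$, which is $\le C\|x_{n}-x\|_{p}\to0$ uniformly on $B$. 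This pairing inequality is Cauchy--Schwarz applied to the weights $(\alpha!)(2\mathbb{N})^{p\alpha/2}$ and $(2\mathbb{N})^{-p\alpha/2}$. I may need to adjust which index pairs with which, but this is routine.

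\emph{Necessity.} This is the main step. Suppose $x_{n}\to x$ strongly. A convergent sequence together with its limit is a bounded set in $\mathcal{K}^{\prime}$. The key fact is that bounded sets of a strict (or at least regular) countable inductive limit of this type are contained and bounded in a single step $\mathcal{H}_{q}$. Here one uses that $\mathcal{K}$ is a nuclear Fréchet space (a countably Hilbert space with nuclear, in fact trace-class, injections). A strongly bounded set in the dual of a Fréchet space is equicontinuous, because Fréchet spaces are barrelled. Equicontinuity means there exist $q$ and $C$ with $|\langle y,f\rangle|\le C\,n_{q}(f)$ for all $y$ in the set, which says exactly that the set lies in the ball of radius $C$ of $\mathcal{H}_{q}$. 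So all $x_{n}$ and $x$ belong to $\mathcal{H}_{q}$, with $\|x_{n}\|_{q}$ bounded.

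To upgrade boundedness to norm convergence, I pass to $\mathcal{H}_{p}$ with $p=q+1$. The injection $\mathcal{H}_{q}\hookrightarrow\mathcal{H}_{q+1}$ has finite trace, hence is compact. It therefore maps the bounded sequence $(x_{n}-x)$ to a relatively compact set in $\mathcal{H}_{p}$. Strong convergence implies weak-$*$ convergence, so $\langle x_{n}-x,f\rangle\to0$ for each $f\in\mathcal{K}$. Since $\mathcal{K}$ is dense in the predual of $\mathcal{H}_{p}$ and the sequence is bounded, every norm-cluster point in $\mathcal{H}_{p}$ must be $0$. Relative compactness then forces $\|x_{n}-x\|_{p}\to0$.

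This is exactly the Gelfand--Shilov argument for perfect spaces cited at \cite[p. 57]{GS2_english}. The expected obstacle is the equicontinuity/barrelledness step, which I would simply quote from the general theory rather than reprove.
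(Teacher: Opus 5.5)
Your argument is correct and is essentially the Gelfand--Shilov argument for perfect spaces that the paper invokes only by citation: barrelledness (equicontinuity) puts the convergent sequence and its limit in a ball of some $\mathcal{H}_{q}$, and a compact injection then upgrades weak-$*$ convergence to norm convergence in $\mathcal{H}_{q+1}$. The only property you actually need is compactness of $\mathcal{H}_{q}\hookrightarrow\mathcal{H}_{q+1}$, and this holds directly: in the orthonormal bases $(2\mathbb{N})^{q\alpha/2}H_{\alpha}$ and $(2\mathbb{N})^{(q+1)\alpha/2}H_{\alpha}$ it is diagonal with entries $(2\mathbb{N})^{-\alpha/2}\to0$; with these norms the one-step injection is in fact not of finite trace, since $\sum_{\alpha}(2\mathbb{N})^{-\alpha/2}=\infty$ and trace class only holds for $\mathcal{H}_{q}\hookrightarrow\mathcal{H}_{q+3}$, so justify compactness this way rather than through the paper's finite-trace claim.
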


This result is helpful to study functions defined on $[0,\infty)$.
Indeed, if $x(t):[0,\infty)\rightarrow\mathcal{K}^{\prime}$, then
since $[0,\infty)$ is a metric space, the existence of a limit is
equivalent to the existence of a limit in terms of sequences. This
property is used in the following section to computing the derivative
of the $\mathbb{H}$-valued Brownian motion. We also note:

\begin{proposition}
  The inductive topology and the strong topology coincide on $\mathcal K^\prime$, meaning that the strong dual of $\mathcal K$ is the inductive limit of the spaces
  $\mathcal H_k$.
\end{proposition}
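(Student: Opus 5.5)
We must show that the strong dual of $\mathcal K$ coincides, as a topological vector space, with the inductive limit $\operatorname{ind}\lim_k \mathcal H_k$. The plan is to compare the two topologies in each direction: first show the inductive topology is finer, then show the strong topology is also finer. The structural input is that $\mathcal K=\bigcap_k \mathcal G_k$ is a countably Hilbert space whose canonical injections $\mathcal G_{k+1}\to\mathcal G_k$ are nuclear (trace class), and that $\mathcal H_k$ is the dual of $\mathcal G_k$ under the pairing $\langle \sum c_\alpha H_\alpha,\sum d_\alpha H_\alpha\rangle=\sum \alpha!\,c_\alpha d_\alpha$. To check this duality, I weight the coefficients and apply the Cauchy--Schwarz inequality, using the factors $(\alpha!)^2(2\mathbb N)^{k\alpha}$ against $(2\mathbb N)^{-k\alpha}$; this gives $\mathcal H_k\cong\mathcal G_k'$ isometrically. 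It then follows that $\mathcal K'=\bigcup_k\mathcal H_k$ as sets.

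\textbf{Inductive topology is finer.} Each inclusion $\mathcal H_k\hookrightarrow\mathcal K'_{\rm strong}$ is continuous. Indeed, if $B\subset\mathcal K$ is bounded, then $B$ is bounded in every $\mathcal G_k$, and so
\[
\sup_{f\in B}|\langle x,f\rangle|\le \|x\|_{-k}\sup_{f\in B}n_k(f).
\]
By the universal property of the inductive limit, the inductive topology is therefore finer than the strong one.

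\textbf{Strong topology is finer.} I expect this to be the main obstacle. The plan is to use that $\mathcal K$ is a Fréchet space that is nuclear, hence Montel, and in particular reflexive and distinguished. For a distinguished Fréchet space, the classical theorem (Köthe, or Gelfand--Shilov in the countably Hilbert setting, as in \cite{GS2_english}) says that the strong dual equals the inductive limit of the duals of the local Banach spaces. In detail:
\begin{enumerate}
\item Take an absolutely convex set $V$ that is a neighborhood of $0$ in the inductive topology, so that $V\cap\mathcal H_k$ contains a ball $\varepsilon_k\,\mathrm{Ball}(\mathcal H_k)$ for every $k$.
\item Consider the convex hull of $\bigcup_k \varepsilon_k\,\mathrm{Ball}(\mathcal H_k)$. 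Its polar in $\mathcal K$ is $B=\{f:\ n_k(f)\le\varepsilon_k^{-1}\ \forall k\}$, which is a bounded subset of $\mathcal K$.
\item Apply the bipolar theorem. The bipolar $B^\circ$ is a strong neighborhood of $0$, and I must show $B^\circ$ is contained in a fixed multiple of $V$; for instance, with halved radii $\varepsilon_k/2$ in place of $\varepsilon_k$.
\end{enumerate}
The key point in step 3 is that, in the Montel space $\mathcal K$, the closure of the convex hull of these balls is weakly closed, so the bipolar adds nothing. The nuclearity of $\mathcal H_k\to\mathcal H_{k+1}$ is what gives compactness of the unit balls of $\mathcal H_k$ inside $\mathcal H_{k+1}$. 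This compactness lets the sum of compact absolutely convex sets be closed, and that closedness is exactly what is needed to conclude $B^\circ\subset 2V$.

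\textbf{Conclusion.} Combining the two directions, the topologies agree. As a consistency check, Proposition \ref{prop} then describes convergent sequences in both topologies in the same way, as it should.
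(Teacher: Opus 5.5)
Your easy direction, the isometric identification $\mathcal H_k\cong\mathcal G_k'$, and the equality $\mathcal K'=\bigcup_k\mathcal H_k$ are all fine. Resting the hard direction on the classical theorem you quote is also legitimate. That theorem says a reflexive, hence distinguished, Fr\'echet space has bornological strong dual; and by Banach--Steinhaus the strongly bounded subsets of $\mathcal K'$ lie in multiples of $\mathrm{Ball}(\mathcal H_k)$, so $\mathrm{ind}_k\,\mathcal H_k$ is exactly the bornological topology associated with the strong one. On that footing you are doing what the paper does: it gives no argument and only cites Alpay--Salomon.

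Your self-contained justification of Step 3, however, has a gap exactly where the content lies. The set $\Gamma=\mathrm{acx}\bigcup_k\varepsilon_k\mathrm{Ball}(\mathcal H_k)$ lives in $\mathcal K'$, and $B^\circ$ is its $\sigma(\mathcal K',\mathcal K)$-closure. Mazur's theorem, even combined with reflexivity of $\mathcal K$ (which makes strong and weak$^*$ closures of convex sets agree), only identifies $B^\circ$ with a closure of $\Gamma$. It says nothing about that closure lying in a multiple of $V$. Compactness makes each finite hull $\mathrm{acx}(K_1\cup\dots\cup K_n)$ closed, but $\Gamma$ is their increasing union, and that union need not be closed.

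Here is a concrete instance, writing $x=\sum_\alpha c_\alpha H_\alpha$ with $c_0$ the coefficient of $H_0=1$, and taking all $\varepsilon_k=1$.
\begin{itemize}
\item $\Gamma$ contains every $x\in\mathcal K'$ with $|c_0|<1$, because $\|x\|_{-N}\downarrow|c_0|$.
\item $\Gamma$ contains no $x$ with $|c_0|=1$ and some $c_\alpha\neq0$, $\alpha\neq0$. Indeed $x=\sum_{k\le N}t_ky_k$ with $\|y_k\|_{-k}\le1$ and $\sum_k|t_k|\le1$ forces $\|x\|_{-N}\le1$, whereas such an $x$ has $\|x\|_{-N}>1$.
\end{itemize}
So the bipolar does add something. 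Nuclearity is also not the relevant input: each $\mathrm{Ball}(\mathcal H_k)$ is already weak$^*$-compact by Alaoglu--Bourbaki. (Incidentally, the one-step injections are compact but not trace class. Their singular values are $(2\mathbb N)^{-\alpha/2}$, and $\sum_\alpha(2\mathbb N)^{-q\alpha}<\infty$ only for $q>1$.)

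Your halved-radii idea does close the gap once you pass to the right closure and use Krein--\v{S}mulian.
\begin{enumerate}
\item Build $\Gamma$ with radii $\varepsilon_k/2$, and let $V_0$ be its closure in the inductive topology. Since $\Gamma$ is itself an absolutely convex inductive neighbourhood, $V_0\subset\Gamma+\Gamma\subset V$.
\item For each $k$ and $r>0$, the set $V_0\cap r\,\mathrm{Ball}(\mathcal H_k)$ is closed, convex and bounded in the Hilbert space $\mathcal H_k$. It is therefore weakly compact, hence $\sigma(\mathcal K',\mathcal K)$-compact, because $\mathcal K\subset\mathcal G_k=\mathcal H_k'$.
\item Every equicontinuous set lies in some $r\,\mathrm{Ball}(\mathcal H_k)$. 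So the Krein--\v{S}mulian theorem for the Fr\'echet space $\mathcal K$ shows that the convex set $V_0$ is $\sigma(\mathcal K',\mathcal K)$-closed.
\item From $\Gamma\subset V_0\subset\overline{\Gamma}^{\,\sigma(\mathcal K',\mathcal K)}$ you get $V_0^\circ=\Gamma^\circ=\{f:\ n_k(f)\le 2/\varepsilon_k\ \forall k\}$, which is bounded in $\mathcal K$.
\item The bipolar theorem then gives $V_0=(\Gamma^\circ)^\circ$, a strong neighbourhood of $0$ contained in $V$.
\end{enumerate}
What this argument really uses is reflexivity of the spaces $\mathcal H_k$ together with Krein--\v{S}mulian, not nuclearity or the Montel property.
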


See \cite[(iv) p. 215 and the references therein]{MR3404695}.\smallskip

The following result will not be used here but is mentioned because it is the key to develop stochastic calculus in the present setting. The inequality is called
Vage inequality, see \cite[Proposition 3.3.2 p. 129]{new_sde} and \cite[Proposition 2.3.3 p. 35]{new_sde} for the finiteness of the coefficient appearing
in \eqref{vage45}, and originates with the work of Vage; see \cite{vage96b,vage96}.

\begin{theorem} (see \cite{MR3029153,MR3404695}) With the convolution of coefficients as product, if $f\in\mathcal{H}_{p}$ and $g\in\mathcal{H}_{q}$
with $p-q\ge2$, then $f\star g\in\mathcal{H}_{-p}$ and there exists
a positive $A(p-q)$, depending only on $p$ and $q$, such that 
\begin{equation}
  \label{vage45}
\|f\star g\|_{-p}\le A(p-q)\|f\|_{-p}\cdot\|g\|_{-q},
\end{equation}
where
\begin{equation}
A(p-q)=\left(\sum_{\alpha\in\ell}(2\mathbb N)^{(q-p)\alpha}\right)^{1/2}.
  \end{equation}
\label{dallas} \end{theorem}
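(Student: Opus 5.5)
We have to prove the Vage inequality (Theorem \ref{dallas}): for $f\in\mathcal H_p$, $g\in\mathcal H_q$ with $p-q\ge 2$, $\|f\star g\|_{-p}\le A(p-q)\|f\|_{-p}\|g\|_{-q}$. We work entirely at the level of the coefficient sequences in the basis $(H_\alpha)$. Writing $f=\sum_\alpha a_\alpha H_\alpha$ and $g=\sum_\beta b_\beta H_\beta$, the Wick product gives $f\star g=\sum_\gamma c_\gamma H_\gamma$ with $c_\gamma=\sum_{\alpha+\beta=\gamma}a_\alpha b_\beta$, a finite sum for each $\gamma$ since $\alpha\le\gamma$ componentwise. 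The only structural fact we need is multiplicativity of the weight, $(2\mathbb N)^{\gamma}=(2\mathbb N)^{\alpha}(2\mathbb N)^{\beta}$ when $\gamma=\alpha+\beta$, together with $(2\mathbb N)^{\beta}\ge 1$.

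Fix $\gamma$. We insert the weights into each term of $c_\gamma$ and apply Cauchy--Schwarz over the pairs $\alpha+\beta=\gamma$:
\[
|c_\gamma|^2(2\mathbb N)^{-p\gamma}=\Big|\sum_{\alpha+\beta=\gamma}a_\alpha(2\mathbb N)^{-p\alpha/2}\,b_\beta(2\mathbb N)^{-q\beta/2}\,(2\mathbb N)^{-(p-q)\beta/2}\Big|^2 ,
\]
and bound this by
\[
\Big(\sum_{\alpha+\beta=\gamma}|a_\alpha|^2(2\mathbb N)^{-p\alpha}(2\mathbb N)^{-(p-q)\beta}\Big)\Big(\sum_{\beta\le\gamma}|b_\beta|^2(2\mathbb N)^{-q\beta}\Big).
\]
The second factor is at most $\|g\|_{-q}^2$. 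Summing over $\gamma$ and reindexing by the pair $(\alpha,\beta)$, the first factor sums to
\[
\sum_{\alpha}|a_\alpha|^2(2\mathbb N)^{-p\alpha}\sum_\beta(2\mathbb N)^{-(p-q)\beta}=\|f\|_{-p}^2\,A(p-q)^2 .
\]
Taking square roots gives the inequality, and with it the membership $f\star g\in\mathcal H_p$, which is the space whose norm is $\|\cdot\|_{-p}$. The statement writes $\mathcal H_{-p}$, which we read as this same space.

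The main obstacle is placing the splitting exponent correctly, so that the extra decay $(2\mathbb N)^{-(p-q)\beta}$ falls on the summation variable in the first factor. Only then does the double sum factor as $\|f\|^2_{-p}$ times a constant. If we split symmetrically instead, we would pick up $\#\{\beta\le\gamma\}$, which is unbounded.

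The last step is finiteness of $A(p-q)^2=\sum_{\beta\in\ell}(2\mathbb N)^{-(p-q)\beta}$. The sum factorizes as $\prod_{n}\big(1-(2n)^{-(p-q)}\big)^{-1}$, with $n$ counted from $1$ to match the convention in $(2\mathbb N)^\alpha$. This product converges if and only if $\sum_n(2n)^{-(p-q)}<\infty$, and that holds exactly when $p-q>1$, in particular for $p-q\ge2$. This is the finiteness result cited from \cite[Proposition 2.3.3 p. 35]{new_sde}.
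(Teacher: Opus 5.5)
Your proof is correct. The Cauchy--Schwarz step with the split $(2\mathbb N)^{-p\beta}=(2\mathbb N)^{-q\beta}(2\mathbb N)^{-(p-q)\beta}$, the reindexing of the nonnegative double sum over pairs $(\alpha,\beta)$, and the product formula giving $A(p-q)<\infty$ for $p-q>1$ are all sound. Your reading of $\mathcal H_{-p}$ as the space normed by $\|\cdot\|_{-p}$, with weights $2,4,6,\ldots$, correctly repairs the paper's notation.

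The paper gives no proof of this statement and only cites it from the literature. The cited argument is this same weighted Cauchy--Schwarz estimate on the convolution coefficients; whether the decay factor rides with $a_\alpha$, as in your version, or forms its own Cauchy--Schwarz vector is immaterial.
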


A similar construction holds {\sl verbatim} for the function \eqref{s-sigma}, using \cite[Theorem 3.21 p. 29]{MR99f:60082}.

\section{Derivatives}
\label{deri}
\setcounter{equation}{0}
Recall that $N_t(w)$ introduced in Section \ref{sec-int} (or e.g., \cite{MR562914}) is understood as  a stochastic distribution
meaning that    $dB_t / dt$ is a topological limit in an underlying locally convex vector space of distributions.

\begin{theorem} The sum 
\begin{equation}
N_{p}(\w)=\sum_{n=0}^{\infty}U\begin{pmatrix}\zeta_{n}(\lambda_{1})Z_{n}(\w) & 0\\
0 & \zeta_{n}(\lambda_{2})W_{n}(\w)
\end{pmatrix}U.
\end{equation}
belongs to $\mathcal{K}^{\prime}$. It is the derivative of $B_{p}$
in the topology of $\mathcal{K}^{\prime}$, in the sense that 
\begin{equation}
\lim_{\substack{h\rightarrow0\\
h^{-1}\,\,{exists}
}
}\frac{B_{p+h}-B_{p}}{h}=N_{p}.
\end{equation}
\end{theorem}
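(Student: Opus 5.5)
The plan is to reduce everything to two classical one-dimensional statements, using the idempotent decomposition. Write $p=\lambda_1e_++\lambda_2e_-$ and $h=h_1e_++h_2e_-$. The condition that $h^{-1}$ exists means exactly $h_1\neq0$ and $h_2\neq0$, with $h^{-1}=h_1^{-1}e_++h_2^{-1}e_-$. Since multiplication in $\mathbb H$ is componentwise in the basis $e_\pm$, we get
\[
\frac{B_{p+h}-B_p}{h}=U\begin{pmatrix}\frac{B^{(1)}_{\lambda_1+h_1}-B^{(1)}_{\lambda_1}}{h_1}&0\\ 0&\frac{B^{(2)}_{\lambda_2+h_2}-B^{(2)}_{\lambda_2}}{h_2}\end{pmatrix}U .
\]
Here $B^{(1)}_t=\sum_n(\int_0^t\zeta_n)Z_n$ and $B^{(2)}_t=\sum_n(\int_0^t\zeta_n)W_n$. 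The space $\mathcal K'$ in which the limit is taken is understood componentwise: the $e_+$ part lives in the Kondratiev distribution space built over $P_1$, with $Z_n=H_{e_n}$, and the $e_-$ part in the one built over $P_2$, with $W_n$ playing the role of $H_{e_n}$. This is the construction that the paper says holds verbatim for \eqref{s-sigma}. Convergence of an $\mathbb H$-valued family in $\mathcal K'$ then means convergence of both components, and $h\to0$ in $\mathbb H$ means $h_1\to0$ and $h_2\to0$. So it suffices to prove the following classical statement once, for a generic orthonormal family $H_{e_n}$ in a Kondratiev-type space: $N_t=\sum_n\zeta_n(t)H_{e_n}\in\mathcal K'$, and $(B_{t+s}-B_t)/s\to N_t$ in $\mathcal K'$ as $s\to0$, $s\neq0$.

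For membership in $\mathcal K'$, I would use \eqref{Hen}: $\|H_{e_n}\|^2_{-k}=(2(n+1))^{-k}$, together with the uniform bound $|\zeta_n(t)|\le\pi^{-1/4}$ on the normalized Hermite functions. Then
\[
\|N_t\|_{-2}^2=\sum_n\zeta_n(t)^2(2(n+1))^{-2}<\infty,
\]
so $N_t\in\mathcal H_2$, uniformly in $t$.

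For the derivative, since $\mathbb R$ is metric, Proposition \ref{prop} reduces the claim to sequences $s_j\to0$. It is then enough to show, in the fixed space $\mathcal H_2$, that
\[
\Big\|\tfrac{B_{t+s}-B_t}{s}-N_t\Big\|_{-2}^2=\sum_n\Big(\tfrac1s\int_t^{t+s}\zeta_n(u)\,du-\zeta_n(t)\Big)^2(2(n+1))^{-2}\to0 .
\]
Each term tends to $0$ by continuity of $\zeta_n$. Each term is also bounded by $4\pi^{-1/2}(2(n+1))^{-2}$, because a mean value of $\zeta_n$ is still bounded by $\pi^{-1/4}$. This dominating sequence is summable, so dominated convergence for series gives the limit. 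The difference quotients lie in $\mathcal H_2$ as well; indeed $B_t\in L^2\subset\mathcal H_2$. So Proposition \ref{prop} applies with the index $2$.

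The main obstacle is bookkeeping rather than analysis. One has to make precise that $\mathcal K'$ for the $\mathbb H$-valued objects is the direct sum of the two Kondratiev spaces attached to $P_1=P_\sigma$ and $P_2=P_\mu$, with $Z_n$ and $W_n$ respectively as the first-chaos basis elements, so that \eqref{Hen} applies on each side. One also has to make precise that the invertibility restriction on $h$ is exactly what allows division componentwise. A secondary point is that the two components converge independently: the limit as $h\to0$ in $\mathbb H$ is a joint limit in $(h_1,h_2)$. This is harmless because each component depends on only one of $h_1,h_2$, and the bounds are uniform.
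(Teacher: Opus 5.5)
Your proof is correct and has the same overall structure as the paper's: split along $e_\pm$, place $N$ in a fixed Hilbert space $\mathcal H_k$ using the uniform bound on the $\zeta_n$, prove that the difference quotients converge in that space, and pass to $\mathcal K'$ via Proposition \ref{prop}. The convergence step is where you genuinely differ.

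The paper uses the raising relation $\zeta_n'(x)=x\zeta_n(x)-\sqrt{2(n+1)}\,\zeta_{n+1}(x)$ to get $|\zeta_n'(x)|\le C(|x|+\sqrt{2(n+1)})$. This gives the termwise Lipschitz estimate $\bigl|\frac1h\int_{\lambda_1}^{\lambda_1+h}\zeta_n-\zeta_n(\lambda_1)\bigr|\le |h|\,C(|\lambda_1|+1+\sqrt{2(n+1)})$. Because the square of that bound grows like $n$, the paper must use the weight $(2(n+1))^{-3}$, i.e. work in $\mathcal H_3$. In exchange it obtains an explicit $O(|h|)$ rate, locally uniform in $\lambda_1$.

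You use only continuity of each $\zeta_n$, the uniform bound $|\zeta_n|\le C$, and dominated convergence for series. This already works with the weight $(2(n+1))^{-2}$. You therefore get convergence in $\mathcal H_2$, the same space in which $N_t$ lives. That is slightly stronger, since $\|f\|_{-3}\le\|f\|_{-2}$, and it needs no derivative estimate, but it gives no rate.

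Your route is more elementary and carries over unchanged to any family of uniformly bounded continuous coefficient functions. The paper's route buys quantitative control. You are also more explicit than the paper about two points: invertibility of $h$ means $h_1h_2\neq 0$, and the limit in $h\in\mathbb H$ splits into two independent one-variable limits.
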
 
\begin{proof}
To compute the derivative one needs to compute the derivatives of
the two underlying real-valued Brownian motion, and this is done as
in for instance \cite{aal2,aal3}, using the tools and framework presented
in the previous section.  Since we aim this paper to at least three audiences (hypercomplex analysis, probability and infinite dimensional analysis) we recall the details in the
classical case. We have
\[
B_{\lambda_1}(w)=\sum_{n=0}^\infty\left(\int_0^{\lambda_1}\zeta_n(u)du\right)Z_n(w)
  \]
  and proceed in a number of steps.\\

  STEP 1: {\sl The formal series $N_{\lambda_1}(w)=\sum_{n=0}^\infty\zeta_n(u) Z_n(w)$ belongs to $\mathcal H_{-2}$.}\smallskip

  The normalized Hermite functions are uniformly bounded in modulus on the real line; see \cite{thangavelu1993lectures}. The claim then follows from $A(2)<\infty$.
  See \cite[Proposition 3.3.2 p. 129]{new_sde}.\\

STEP 2: {\sl
Let $C$ be such that
\[
  |\zeta_n(x)|\le C<\infty, \quad x\in\mathbb R,\,\, n\in \mathbb N_0.
\]
Then,
\begin{equation}
|\zeta_n^\prime(x)|\le C(|x|+\sqrt{2(n+1)}).
  \label{bound}
  \end{equation}
}
This follows directly from the raising relation for the normalized Hermite functions,
  \[
    \zeta^\prime_n(x)=x\zeta_n(x)-\sqrt{2(n+1)}\zeta_{n+1}(x).
  \]
  See e.g. \cite[(1.1.5) p.2]{thangavelu1993lectures} for the corresponding relation for the non-normalized Hermite functions.\\
  
  STEP 3: {\sl In the $\mathcal H_{-3}$ topology it holds that
    \[
\lim_{h\rightarrow 0}\frac{B_{\lambda_1+h}-B_{\lambda_1}}{h}-N_{\lambda_1}=0.
\]
}

Using \eqref{Hen} for $k=3$ and \eqref{bound} we have for $|h|\le 1$
\[
  \begin{split}
    \left|\frac{B_{\lambda_1+h}-B_{\lambda_1}}{h}-N_{\lambda_1}\right|^2_{\mathcal H_{-3}}
    &=\sum_{n=0}^\infty\frac{1}{8(n+1)^3}
\left|\frac{1}{h}
      \int_{\lambda_1}^{\lambda_1+h}\left(\zeta_n(u)-
        \zeta_n(\lambda_1)\right)du\right|^2\\
    &\le\sum_{n=0}^\infty\frac{1}{8(n+1)^3}
\frac{1}{h^2}   \left(   \int_{\lambda_1}^{\lambda_1+h}\max_{u\in[\lambda_1,\lambda_1+h]}|\zeta_n(u)-      \zeta_n(\lambda_1)|du
\right)^2\\
    &=\sum_{n=0}^\infty\frac{1}{8(n+1)^3}\left(\max_{u\in[\lambda_1,\lambda_1+h]}|\zeta_n(u)-      \zeta_n(\lambda_1)|\right)^2\\
          &=\sum_{n=0}^\infty\frac{1}{8(n+1)^3}
\left(\max_{u\in[\lambda_1,\lambda_1+h]}|\int_u^{\lambda_1}\zeta^\prime_n(u)du|\right)^2\\
      &\le\sum_{n=0}^\infty\frac{1}{8(n+1)^3}\left(\max_{u\in[\lambda_1,\lambda_1+h]}h^2\cdot|\zeta^\prime_n(u)|\right)^2\\
          &\le h^2\sum_{n=0}^\infty\frac{1}{8(n+1)^3}(C(|\lambda_1|+1+\sqrt{2(n+1)})^2
\end{split}
\]
which goes to $0$ as $h\rightarrow 0$ since $\sum_{n=0}^\infty\frac{(\sqrt{2(n+1)})^2}{(n+1)^3}<\infty$.\smallskip

STEP 4: {\sl The limit in Step 3 is in the strong topology of the Kondratiev space $\mathcal K^\prime$.}\smallskip

Since the real line is a metric space it is sufficient to consider sequences to compute the limit in the strong topology of $\mathcal K^\prime$. The result follows then from
Proposition \ref{prop} and the previous step.
\end{proof}

\section{The fractional $\mathbb H$-Brownian motion}
\setcounter{equation}{0}
\label{sec-fbm}
Recall the definition of the classical real-valued stationary increment Gaussian processes with covariance \eqref{k-t-s-1}, with $d\sigma(u)=m(u)du$ and
$m(u)$ even,
\[
K_{r}(t,s)=\int_{\mathbb{R}}\frac{(1-\cos tu)(1-\cos su)+(\sin tu)(\sin su)}{u^{2}}m(u)du,\quad t,s\in\mathbb R.
\]
We now consider the counterpart of these processes in the hyperbolic setting. We use the setting of Corollary \ref{cor-78} and
let $Z_{1},Z_{2},\ldots$ and $W_1,W_2,\ldots$ be constructed as there from two {\sl a priori} different cylindrical probabilities. We set
\begin{eqnarray}
\label{9-6}
  B_{t}^{(1)}(\w) & = & \sum_{n=0}^{\infty}\left(\int_{\mathbb{R}}\frac{1-\cos tu}{u}\zeta_{n}(u)\sqrt{m(u)}du\right)Z_{2n}(\w)\\
B_{t}^{(2)}(\w) & = & \sum_{n=0}^{\infty}\left(\int_{\mathbb{R}}\frac{1-\cos tu}{u}\zeta_{n}(u)\sqrt{m(u)}du\right)W_{2n}(\w)\\
  B_{t}^{(3)}(\w) & = & \sum_{n=0}^{\infty}\left(\int_{\mathbb{R}}\frac{\sin tu}{u}\zeta_{n}(u)\sqrt{m(u)}du\right)
                        Z_{2n+1}(\w)\\
  B_{t}^{(4)}(\w) & = & \sum_{n=0}^{\infty}\left(\int_{\mathbb{R}}\frac{\sin tu}{u}\zeta_{n}(u)\sqrt{m(u)}du\right)
                        W_{2n+1}(\w)
                        \label{9-9}
\end{eqnarray}

\begin{theorem} In the above notation, the $\mathbb{H}$-valued
process 
\begin{equation}
B_{p}^{m}(\w)=U\begin{pmatrix}B_{\lambda_{1}}^{(1)}(\w)+B_{\lambda_{1}}^{(3)}(\w) & 0\\
0 & B_{\lambda_{2}}^{(2)}(\w)+B_{\lambda_{2}}^{(4)}(\w) 
\end{pmatrix}U,\quad p,q\in\mathbb H,
\end{equation}
is centered Gaussian with covariance function 
\begin{equation}
K_{m}(p,q)=\int_{\mathbb{R}}\frac{(1-\cos pu)(1-\cos qu)+(\sin pu)(\sin qu)}{u^{2}}m(u)du
\end{equation}
\end{theorem}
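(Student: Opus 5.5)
The plan is to reduce the statement to two classical computations via the idempotent decomposition, exactly as in the proof of the theorem on the $\mathbb H$-valued Brownian motion. By \eqref{repre} and \eqref{456654}, the covariance of $B^m_p$ is
\[
U\begin{pmatrix}E_1\big((B^{(1)}_{\lambda_1}+B^{(3)}_{\lambda_1})(B^{(1)}_{\mu_1}+B^{(3)}_{\mu_1})\big) & 0\\
0 & E_2\big((B^{(2)}_{\lambda_2}+B^{(4)}_{\lambda_2})(B^{(2)}_{\mu_2}+B^{(4)}_{\mu_2})\big)\end{pmatrix}U .
\]
Here $E_1$ is computed with respect to $P_1=P_\sigma$ and only involves the $Z_n$; likewise $E_2$ is computed with respect to $P_2=P_\mu$ and only involves the $W_n$. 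So the lack of independence between the $Z_n$ and the $W_m$ never enters. Meanwhile $K_m(p,q)$ is interpreted through the functional calculus $f_{\mathbb H}$ of Section 2, that is, $\cos pu=U\,{\rm diag}(\cos\lambda_1u,\cos\lambda_2u)\,U$ and similarly for the other factors. Hence the right-hand side is $U\,{\rm diag}(K_r(\lambda_1,\mu_1),K_r(\lambda_2,\mu_2))\,U$ with $K_r$ as in \eqref{k-t-s-1}. Using $U^2=I$ and the multiplicativity of the diagonal representation, it therefore suffices to prove that $E_1\big((B^{(1)}_t+B^{(3)}_t)(B^{(1)}_s+B^{(3)}_s)\big)=K_r(t,s)$, and the same identity for the second entry.

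For the classical identity, I use that the $Z_n$ are orthonormal in $L^2(P_1)$ (Corollary \ref{cor-78}). Because $B^{(1)}$ uses only even-indexed $Z$'s and $B^{(3)}$ only odd-indexed ones, the cross terms vanish. This gives
\[
E_1(B^{(1)}_tB^{(1)}_s)=\sum_n \langle \phi_t,\zeta_n\rangle\langle\phi_s,\zeta_n\rangle,\qquad \phi_t(u)=\frac{1-\cos tu}{u}\sqrt{m(u)}.
\]
Since $(\zeta_n)$ is an orthonormal basis of $L^2(\mathbb R,du)$, Parseval yields $\int_{\mathbb R}\phi_t\phi_s\,du=\int\frac{(1-\cos tu)(1-\cos su)}{u^2}m(u)du$. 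In the same way, $E_1(B^{(3)}_tB^{(3)}_s)=\int\frac{\sin tu\sin su}{u^2}m(u)du$, and adding the two gives $K_r(t,s)$. Gaussianity and centering follow because each entry is an $L^2$-limit of finite linear combinations of jointly Gaussian centered variables under the relevant $P_j$; the definition of an $\mathbb H$-valued Gaussian process then applies.

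The main point to verify, and the expected obstacle, is that $\phi_t$ and $\psi_t(u)=\frac{\sin tu}{u}\sqrt{m(u)}$ lie in $L^2(\mathbb R,du)$. Without this, neither the Parseval step nor the $L^2(P_1)$-convergence of the series \eqref{9-6}--\eqref{9-9} is justified. I would prove it using \eqref{m-u-2}. Near $u=0$, both $(1-\cos tu)^2/u^2$ and $\sin^2 tu/u^2$ are bounded, by $Ct^2$ on $|u|\le1$, while $m$ is integrable there because $\int_{|u|\le1}m\le2\int m/(1+u^2)$. For $|u|\ge1$, both quantities are at most $4/u^2\le 8/(1+u^2)$, so integrability again follows from \eqref{m-u-2}. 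Square-summability of the coefficients then gives $L^2$-convergence of the series, and Parseval applies.
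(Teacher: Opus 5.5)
Your proposal is correct and takes the same route as the paper. The paper reduces to the two diagonal entries via \eqref{456654}, notes that $\frac{1-\cos tu}{u}\sqrt{m(u)}$ and $\frac{\sin tu}{u}\sqrt{m(u)}$ lie in $L^2(\mathbb R)$ by \eqref{m-u-2}, and uses that the even- and odd-indexed parts are uncorrelated (the paper says independent) under each of $P_1=P_\sigma$ and $P_2=P_\mu$; your write-up additionally supplies the Parseval step with respect to the Hermite basis, the functional-calculus reading of $K_m(p,q)$, and an explicit check of the $L^2$ bounds near $0$ and at infinity, which the paper leaves implicit.
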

\begin{proof}
We proceed in a number of steps.\\

STEP 1: \textsl{The functions $\varphi_{t}$ and $\psi_{t}$ defined
a.e. on $\mathbb{R}$ by 
\begin{eqnarray}
\varphi_{t}(u) & = & \frac{1-\cos tu}{u}\zeta_{n}(u)\sqrt{m(u)}\\
\psi_{t}(u) & = & \frac{\sin tu}{u}\zeta_{n}(u)\sqrt{m(u)}
\end{eqnarray}
where $\zeta_{1},\zeta_{2},\ldots$ denote the normalized Hermite functions (see \eqref{hnnorm}),
belong to ${L}^{2}(\mathbb{R})$.}\smallskip{}

 This follows directly from the bound \eqref{m-u-2}.\\

STEP 2: \textsl{The proof is concluded by remarking that $B^{(1)}$ and $B^{(3)}$ are independent with respect to $P_\mu$ , and  $B^{(2)}$ and $B^{(4)}$.
  are independent with respect to $P_\sigma$}

\end{proof}

When $m(u)=\frac{|u|^{1-2H}}{2\pi}$ the preceding theorem becomes

\begin{theorem} The $\mathbb{H}$-valued process 
\begin{equation}
B_{p}^{H}(\w)=U\begin{pmatrix}B_{\lambda_{1}}^{(1)}(\w)+B_{\lambda_{1}}^{(3)}(\w) & 0\\
0 & B_{\lambda_{1}}^{(2)}(\w)+B_{\lambda_{1}}^{(4)}(\w) 
\end{pmatrix}U
\end{equation}
is centered Gaussian with covariance function \eqref{K-H-t-s}: 
\[
K_{H}(p,q)=\frac{V_H}{\pi}\left\{ |p|_{\mathbb{H}}^{2H}+|q|_{\mathbb{H}}^{2H}-|p-q|_{\mathbb{H}}^{2H}\right\}
\]
where $V_{H}$ is given by \eqref{V-H}.
\end{theorem}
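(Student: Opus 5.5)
The plan is to reduce everything to the two diagonal blocks through the idempotent decomposition, and then to quote the classical real-valued result in each block. By \eqref{456654}, the covariance of $B^H_p$ is $U\,\mathrm{diag}\bigl(E_1(X_1(\lambda_1)X_1(\mu_1)),\,E_2(X_2(\lambda_2)X_2(\mu_2))\bigr)\,U$, where $X_1=B^{(1)}+B^{(3)}$ and $X_2=B^{(2)}+B^{(4)}$. I read the second block as evaluated at $\lambda_2$, which is the evident intent and matches the preceding theorem.

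The general theorem just proved, applied with $m(u)=\frac{|u|^{1-2H}}{2\pi}$, gives each diagonal entry as the real kernel $K_r(\lambda_j,\mu_j)$ of \eqref{k-t-s-1}. Before invoking it, I will check that this $m$ is admissible. It is even and positive a.e. Condition \eqref{m-u-2} holds for $H\in(0,1)$: near $0$ the integrand is $|u|^{1-2H}$ with $1-2H>-1$, and at infinity it decays like $|u|^{-1-2H}$, which is integrable since $H>0$. Hence Step 1 of the previous proof applies, and $\varphi_t,\psi_t\in L^2(\mathbb R)$.

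The key step, and the one needing care, is the classical identity \eqref{k-t-s-1-2}:
\[
\frac1{2\pi}\int_{\mathbb R}\frac{(1-\cos tu)(1-\cos su)+\sin tu\sin su}{u^2}|u|^{1-2H}du=\frac{V_H}{\pi}\bigl(|t|^{2H}+|s|^{2H}-|t-s|^{2H}\bigr).
\]
I would prove it directly. First expand the numerator as $1-\cos tu-\cos su+\cos((t-s)u)$, which equals
\[
\tfrac12\bigl[(2-2\cos tu)+(2-2\cos su)-(2-2\cos(t-s)u)\bigr].
\]
This reduces the claim to computing $\int_{\mathbb R}\frac{1-\cos au}{u^2}|u|^{1-2H}du$. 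Using evenness and substituting $v=|a|u$, this integral equals $2|a|^{2H}V_H$, with $V_H$ as in \eqref{V-H}. Combining the three terms with the prefactor $\frac1{2\pi}$ gives exactly $\frac{V_H}{\pi}(|t|^{2H}+|s|^{2H}-|t-s|^{2H})$.

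The explicit Gamma-function value of $V_H$ is a standard integral and only needs to be cited; for $H=\tfrac12$ it is $\int_0^\infty\frac{1-\cos u}{u^2}du=\frac\pi2$, which recovers the Brownian case \eqref{classical-bm}. The only delicate point is convergence of each separated integral. It holds because $1-\cos au=O(u^2)$ near $0$ and $2H<2$.

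Finally, I reassemble the blocks. Since the functional calculus $f\mapsto f_{\mathbb H}$ acts entrywise on the diagonal, we have
\[
U\,\mathrm{diag}\bigl(|\lambda_1|^{2H}+|\mu_1|^{2H}-|\lambda_1-\mu_1|^{2H},\ \cdot\,\bigr)\,U=|p|_{\mathbb H}^{2H}+|q|_{\mathbb H}^{2H}-|p-q|_{\mathbb H}^{2H}
\]
by \eqref{p-q}. The real scalar $V_H/\pi$ factors out, which gives $K_H(p,q)$. Gaussianity and centering follow exactly as in the preceding theorem: each block is an $L^2$-limit of linear combinations of the centered Gaussians $Z_n$, respectively $W_n$, under $P_\sigma$, respectively $P_\mu$.
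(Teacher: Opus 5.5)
Your proposal is correct and follows the paper's route. The paper obtains this theorem by specializing the preceding general-$m$ theorem to $m(u)=\frac{|u|^{1-2H}}{2\pi}$ and applying the classical identity \eqref{k-t-s-1-2} in each idempotent block. You go a bit further: you verify \eqref{k-t-s-1-2} yourself via $(1-\cos tu)+(1-\cos su)-(1-\cos (t-s)u)$ and the scaling $v=|a|u$, and you check that this $m$ satisfies \eqref{m-u-2}. You also rightly read the second block at $\lambda_2$, and your computation correctly gives the factor $V_H/\pi$ shown in the statement, rather than the $V_H$ written in \eqref{K-H-t-s}.
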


Here too one can compute the derivative of the process in the topological vector space $\mathcal K^\prime$, under appropriate hypothesis on $m$; see  \cite{aal2,aal3}
for the classical setting.

\begin{remark}
  Figure \ref{fig-1} was drawn by one of the co-authors (LMA) and already appears in \cite{MR4998574}.
  \end{remark}
 \bibliographystyle{plain}
 \def\cprime{$'$} \def\cprime{$'$} \def\cprime{$'$}
  \def\lfhook#1{\setbox0=\hbox{#1}{\ooalign{\hidewidth
  \lower1.5ex\hbox{'}\hidewidth\crcr\unhbox0}}} \def\cprime{$'$}
  \def\cprime{$'$} \def\cprime{$'$} \def\cprime{$'$} \def\cprime{$'$}
  \def\cprime{$'$}

\end{document}